\DeclareMathOperator{\dcl}{dcl}
 \DeclareMathOperator{\dom}{dom}
\DeclareMathOperator{\tp}{tp}
\DeclareMathOperator{\cl}{Cl}
\newtheorem{introtheorem}{Theorem}
\newtheorem{introprop}{Proposition}
\newtheorem{theorem}{Theorem}[section]
\newtheorem{conjecture}[theorem]{Conjecture}
\newtheorem{corollary}[theorem]{Corollary}
\newtheorem{fact}[theorem]{Fact}
\newtheorem{lemma}[theorem]{Lemma}
\newtheorem{proposition}[theorem]{Proposition}
\theoremstyle{definition}
\newtheorem{definition}[theorem]{Definition}
\newtheorem{example}[theorem]{Example}
\newtheorem{remark}[theorem]{Remark}
\newtheorem{question}[theorem]{Question}
\newcommand{\Rr}{{\mathds{R}}}
\newcommand{\Nn}{{\mathds{N}}}
\newcommand{\Qq}{{\mathds{Q}}}
\newcommand{\Zz}{{\mathbb Z}}
\newcommand{\CL}{{\mathcal L}}
\newcommand{\CN}{{\mathcal N}}
\newcommand{\CR}{{\mathcal R}}
\newcommand{\CM}{{\mathcal M}}
\newcommand{\CC}{{\mathcal C}}
\newcommand{\CD}{{\mathcal D}}
\newcommand{\0}{\emptyset}
\renewcommand{\phi}{\varphi}
\long\def\symbolfootnote[#1]#2{\begingroup%
\def\thefootnote{\fnsymbol{footnote}}\footnote[#1]{#2}\endgroup}
\def\Ind#1#2{#1\setbox0=\hbox{$#1x$}\kern\wd0\hbox to 0pt{\hss$#1\mid$\hss}
\lower.9\ht0\hbox to 0pt{\hss$#1\smile$\hss}\kern\wd0}
\def\Notind#1#2{#1\setbox0=\hbox{$#1x$}\kern\wd0\hbox to 0pt{\mathchardef
\nn=12854\hss$#1\nn$\kern1.4\wd0\hss}\hbox to
0pt{\hss$#1\mid$\hss}\lower.9\ht0 \hbox to
0pt{\hss$#1\smile$\hss}\kern\wd0}
\def\la{\langle}
\def\ra{\rangle}
\def\Ralg{\Rr_{\mathrm{alg}}}
\title[On  weakly o-minimal non-valuational structures]{On definable Skolem functions in weakly o-minimal non-valuational structures}
 \author[P. Eleftheriou]{Pantelis E. Eleftheriou}
\address{Zukunftskolleg and Department of Mathematics and Statistics, University of Konstanz, Box 216, 78457 Konstanz, Germany}
\email{panteleimon.eleftheriou@uni-konstanz.de}
\thanks{The first author was supported by an Independent Research Grant from the German Research Foundation (DFG) and a Zukunftskolleg Research Fellowship. The second author was partially supported by an Israel Science
  Foundation grant number 1156/10.}
\author[A. HASSON]{Assaf Hasson}
\address{Department of mathematics\\
Ben Gurion University of the Negev\\
Be'er Sheva,\\
Israel} \email{hassonas@math.bgu.ac.il}
\thanks{}
\author[G. Keren]{Gil Keren}
\thanks{}
\address{Department of mathematics\\
Ben Gurion University of the Negev\\
Be'er Sheva,\\
Israel}
 \email{kerengi@cs.bgu.ac.il}
\subjclass{03C64}
 \keywords{Weakly o-minimal structures, Definable Skolem functions, o-minimal traces}
\date{\today}
\begin{document}


\begin{abstract} 
We prove that all known examples of weakly o-minimal non-valuational structures have no definable Skolem functions. We show, however, that such structures eliminate imaginaries up to (definable families of) definable cuts. Along the way we give some new examples of weakly o-minimal non-valuational structures. 

\end{abstract}

\maketitle

\section{Introduction}

A fundamental application of o-minimal cell decomposition is the fact that o-minimal expansions of groups admit definable choice, implying -- among others -- the existence of atomic models, elimination of imaginaries and curve selection. 

In the present paper we study the analogous properties in the context of weakly o-minimal structures. Recall that a structure $\CM=\la M, <, \dots\ra$ is  \emph{weakly o-minimal} if $<$ is a dense linear order and every definable subset of $M$ is a finite union of convex sets. Weakly o-minimal structures were introduced by Cherlin-Dickmann \cite{cd} in order to study the model theoretic properties of real closed rings. They were later also used in Wilkie's proof of the o-minimality of real exponential field \cite{WilkiePfaff}, as well as in van den Dries' study of Hausdorff limits \cite{vdDriesLimits}. Macpherson-Marker-Steinhorn \cite{MacMaSt}, followed-up by Wencel \cite{Wenc1, Wenc2}, began a systematic study of weakly o-minimal groups and fields, revealing many similarities with the o-minimal setting. 

An important dichotomy between valuational structures -- those admitting a definable convex sub-group --  and non-valuational ones arose, supported by good evidence that the latter structures resemble o-minimal structures more closely than what the former do. For example, strong monotonicity and strong cell decomposition theorems were proved for non-valuational structures. As a weakly o-minimal expansion of an ordered group cannot admit a definable choice function for the cosets of a non-trivial proper convex sub-group, our study is immediately restricted to the non-valuational case. 

The first results of the present note unravel a discrepancy  between the weakly o-minimal non-valuational setting and the strictly o-minimal setting:  we show that in all known (to us) weakly o-minimal non-valuational expansions of ordered groups definable Skolem functions do not exist. We conjecture that, in fact, no strictly weakly o-minimal expansion of an ordered group admits definable choice. 

The non-existence of definable Skolem functions is proved, essentially, by contradicting (a possible generalisation of) curve selection in \emph{o-minimal traces} (see Definition \ref{trace} below). On the positive side, however, we prove that, using different techniques, elimination of imaginaries -- to a certain extent -- can still be obtained. 

%

Weakly o-minimal  structures arise naturally as expansions of o-minimal structures by externally definable sets \cite{BaiPoi}. Among those, \emph{dense pairs} give rise to non-valuational structures, motivating the following definition:

\begin{definition}\label{trace}
	A structure $\CM$ is an \emph{o-minimal trace} if there exists  an o-minimal expansion of a group, $\CN_0$, in a language $\CL_0$ such that $M\subsetneq N_0$ is dense in $N_0$, $\CM|\CL_0\prec \CN_0$ and $\CM$ is the structure induced on $M$ from $\CN$.
\end{definition}

In other words, $\CM$ is an o-minimal trace if there exists a dense pair $(\CN_0,\CM_0)$ of o-minimal structures (see \cite{vdDriesDense} for details) and $\CM$ is $\CM_0$ expanded by all $\CN_0$-definable sets. In particular no o-minimal trace is o-minimal. To the best of our knowledge all known examples of weakly o-minimal expansions of ordered groups are o-minimal traces. In particular we prove:


\begin{introprop}\label{expansion}
Let $\CM$ be an o-minimal expansion of an ordered group, $\CM'$ any expansion of $\CM$ by cuts. Then $\CM'$  is non-valuational if and only if it is an o-minimal trace. 
\end{introprop}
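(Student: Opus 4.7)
The plan is to prove each direction separately, with the harder work lying in $(\Rightarrow)$.

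For $(\Leftarrow)$, assume $\CM'$ is the trace of a dense pair $(\CN_0,\CM)$ and suppose, towards a contradiction, that $H\subseteq M$ is a definable non-trivial proper convex subgroup. By definition of trace, $H = T\cap M$ for some $\CN_0$-definable $T$. Since $T\cap M$ is convex in $M$ and $M$ is dense in $N_0$, a short density argument lets me replace $T$ by its convex hull in $N_0$, so I may assume $T$ is an interval with endpoints in $N_0\cup\{\pm\infty\}$. The symmetry $H = -H$ combined with density of $M$ in $N_0$ then forces $T = (-\beta,\beta)$ for some $\beta\in N_0\cup\{\infty\}$. If $\beta = \infty$, then $H = M$, contradicting properness. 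If $\beta$ is a positive element of $N_0$, density produces $m\in M$ with $\beta/2 < m < \beta$, so $m\in H$, but $2m > \beta$ and hence $2m\notin H$, contradicting that $H$ is a subgroup.

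For $(\Rightarrow)$, write $\CM' = (\CM, P_i)_{i\in I}$ where each $P_i$ names the lower half $C_i$ of a cut $(C_i,D_i)$ in $M$. The plan is to construct an elementary extension $\CN_0\succ\CM$ in the language $\CL_0$ of $\CM$ which realizes every cut $C_i$, in which $M$ is dense, and which induces on $M$ precisely $\CM'$. Realization is immediate: the partial type $\{c < x_i < d : i\in I,\, c\in C_i,\, d\in D_i\}$ is finitely satisfiable in $\CM$, so is realized by $\bar a = (a_i)_{i\in I}$ in an elementary extension; take $\CN_0$ prime over $M\cup\bar a$ in $\CL_0$, which exists because $\CM$ is o-minimal. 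Density of $M$ in $N_0$ is the key step: if it fails, then some $\CL_0$-term over $M\cup\bar a$ produces a positive element $\epsilon\in N_0$ smaller than every positive element of $M$. Localizing this failure via monotonicity and cell decomposition in $\CN_0$, one obtains an $\CL_0(M)$-definable family of cuts of $M$ with uniform positive gaps, whose gap structure is $\CM'$-definable (the gap at $\bar b\in M^n$ being determined by conditions $P_{i_j}(f_j(\bar b))$); extracting from this family an invariant of the form ``$h + $ gap $ = $ gap'' yields a non-trivial proper convex subgroup of $M$, contradicting non-valuationality of $\CM'$.

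To conclude that the structure induced on $M$ by $\CN_0$ is precisely $\CM'$, note that every $\CN_0$-definable subset of $M^n$ is of the form $\phi(\bar x,\bar m,\bar a)\cap M^n$ for some $\CL_0$-formula $\phi$ and parameters $\bar m\in M$; by cell decomposition in $\CN_0$ this reduces to a boolean combination of conditions $f_j(\bar x,\bar m) < a_{i_j}$ with $f_j$ being $\CL_0$-definable, each of which rewrites as $P_{i_j}(f_j(\bar x,\bar m))$ and is therefore $\CM'$-definable; conversely each $P_i$ is trivially the trace of $\{x < a_i\}$. The main obstacle is the density step: this is the only place where non-valuationality of $\CM'$ (as opposed to mere non-valuationality of each individual $C_i$) must be used, because combinations of non-valuational cuts could a priori produce valuational behaviour in $\CN_0$, and the delicate part is the translation of any failure of density into an explicit $\CM'$-definable non-trivial proper convex subgroup of $M$.
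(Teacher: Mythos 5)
Your overall architecture matches the paper's --- realize the named cuts in an elementary extension $\CN_0$, prove $M$ is dense in $N_0$, and identify the induced structure --- and your $(\Leftarrow)$ direction is fine (the paper leaves it implicit). But both hard steps of $(\Rightarrow)$ have genuine gaps. First, the density step: failure of density does \emph{not} produce an element of $N_0$ smaller than every positive element of $M$. If $(a,b)\cap M=\0$ then $b-a$ is bounded below by the ``width'' of the cut that $a$ realizes over $M$, and for a valuational cut this width can be an ordinary positive element of $M$ (realize the cut determined by a proper convex subgroup $H$: the resulting gaps have length $h$ for positive $h\in H$). So your reduction to infinitesimals is false as stated, and what actually has to be proved is that \emph{every} element of $N_0$ realizes a non-valuational cut over $M$. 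The paper does this one generator at a time: Marker's omitting-types theorem shows the infinitesimal type is omitted in $\CM(a)$ when $a$ realizes an irrational cut, and a uniform-continuity argument then shows definable functions carry non-valuational cuts to non-valuational cuts; density follows by transfinite induction over the family of cuts. Your alternative --- extract an $\CM'$-definable convex subgroup from the gap --- contains the right germ of an idea (the stabilizer $\{h: C+h= C\}$ of a valuational cut is a convex subgroup), but to contradict non-valuationality of $\CM'$ you must first show that the offending cut is $\CM'$-definable, which is exactly your third step and is itself flawed; the dependencies are circular.

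Second, the identification of the induced structure. Cell decomposition over $M\cup\bar a$ yields conditions $g(\bar x,\bar m,\bar a)<0$ in which the parameters $a_i$ are entangled with the variables; they do not separate into the form $f_j(\bar x,\bar m)<a_{i_j}$. Already the set $\{x\in M: x<a_1+a_2\}$ is the cut of the \emph{new} element $a_1+a_2$ of $N_0$, and rewriting it in $\CM'$ requires something like $\exists y_1\exists y_2\,(y_1\in C_1\land y_2\in C_2\land x<y_1+y_2)$, which is correct only because addition is increasing in each variable. For a general element $f(a_1,\dots,a_n)$ the paper first quotes van den Dries' theorem on dense pairs to reduce the entire induced structure to unary cuts $(-\infty,a)\cap M$ with $a\in N_0$, and then runs a monotonicity-type argument (choosing $L_i=C_i$ or $L_i=D_i$ according to whether $f$ is increasing or decreasing in the $i$-th coordinate near $\bar a$, with $n$ minimal) to define each such cut in $\CM'$. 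That argument is the real content of the step and cannot be replaced by the one-line reduction you give.
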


Section 3 is dedicated to the construction of an example of a weakly o-minimal non-valuational ordered group that is not an o-minimal trace (or even a reduct of an o-minimal trace). \\

The key to our analysis of definable Skolem functions is: 

\begin{definition}\label{nel}
A weakly o-minimal structure $\CM$  \emph{has no external limits} if for every definable $f:(a, b)\to M$ where $a, b\in M\cup \{\pm \infty\}$, the limits $\lim_{t\to a^+} f(t)$ or $\lim_{t\to a^+} f(t)$ exist in  $M\cup\{\pm\infty\}$.
\end{definition}

We recall that by the previously mentioned \emph{strong monotomicity} (or see \cite[Lemma 1.3]{Wenc1}) the above limits always exists in the Dedekind completion of $M$, justifying the above terminology. Of course, all o-minimal structures have no external limits, but beyond that we only know: 

\begin{introprop}\label{thm1}
O-minimal traces have no external limits. 
\end{introprop}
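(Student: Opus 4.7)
The plan is to deduce the claim from o-minimality of $\CN_0$ together with a descent argument pinning down the one-sided limit in $M\cup\{\pm\infty\}$. First I will apply the strong monotonicity theorem \cite[Lemma 1.3]{Wenc1} inside $\CM$ to partition $(a,b)\cap M$ into finitely many convex pieces on each of which $f$ is continuous and monotonic. By symmetry it suffices to establish the claim for the one-sided limit at $a^+$ under the assumption that $f$ is continuous and monotonic on $(a,a_1)\cap M$ for some $a_1>a$.

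Since $\CM$ is the structure induced on $M$ from $\CN_0$, there is an $\CL_0$-formula $\psi(x,y,\bar n)$ with $\bar n\in N_0$ such that $\gra(f)=\{(x,y)\in M^2:\CN_0\models\psi(x,y,\bar n)\}$. Let $Y\subseteq N_0^2$ be the set defined in $\CN_0$ by $\psi(\cdot,\cdot,\bar n)$. By o-minimal cell decomposition in $\CN_0$, any $2$-dimensional cell of $Y$ above $(a,a_1)$ would, by density of $M$ in $N_0$, meet $M^2$ in a set whose $x$-fibres contain $M$-intervals, contradicting that $f$ is a function. Therefore, after shrinking $a_1$ if necessary, there is a continuous monotonic $\CL_0$-definable (in $\CN_0$) function $F\colon(a,a_1')\to N_0$ satisfying $F|_{M\cap(a,a_1')}=f|_{M\cap(a,a_1')}$, and by o-minimality of $\CN_0$ the one-sided limit $L:=\lim_{x\to a^+}F(x)$ exists in $N_0\cup\{\pm\infty\}$.

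The heart of the argument is to show $L\in M\cup\{\pm\infty\}$. The main obstacle is that the parameters $\bar n$ of $F$ a priori lie in $N_0\setminus M$, so the elementarity $\CM|\CL_0\prec\CN_0$ cannot be applied to $F$ directly. The key observation is that $F$ is determined by its restriction $f|_M$: since $F$ is continuous and $M$ is dense in $N_0$, any $\CL_0$-elementary self-map of a sufficiently saturated extension of $\CN_0$ that fixes $M$ pointwise must fix $F$ pointwise on its domain, and hence fix $L$. This Galois-invariance places $L$ in $\dcl^{\CL_0}(M\cup\{a\})$; combined with $a\in M\cup\{\pm\infty\}$ and the standard fact that in an o-minimal extension $\CM|\CL_0\prec\CN_0$ the definable closure over $M$ adds no new elements of $N_0$, we conclude $L\in M\cup\{\pm\infty\}$, as required.
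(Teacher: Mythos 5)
Your first two paragraphs are essentially sound and reconstruct, by hand, the extension step the paper gets from Baisalov--Poizat: $f$ extends to an $\CN_0$-definable continuous monotone $F$ on an interval $(a,a_1')$ with $F|_{M\cap(a,a_1')}=f|_{M\cap(a,a_1')}$. The gap is exactly where you place ``the heart of the argument''. The claim that an $\CL_0$-elementary self-map $\sigma$ of a saturated $\CN^*\succ\CN_0$ fixing $M$ pointwise must fix $F$ pointwise does not follow from continuity of $F$ together with density of $M$ in $N_0$, because $M$ is \emph{not} dense in the saturated model $N^*$ (indeed no point of $N_0\setminus M$ lies in the closure of $M$ computed in $N^*$: an interval of infinitesimal length around such a point meets no element of $N_0$ other than its centre). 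What you actually know is that $\{x:F(x)=\sigma(F)(x)\}$ is an $\CN^*$-definable set containing $M\cap(a,a_1')$; by o-minimality its complement in $(a,a_1')$ is a finite union of points and intervals of $N^*$, but those intervals may simply be disjoint from $M$, so no contradiction arises and $\sigma(F)=F$ does not follow. If instead you chase the $\epsilon$--$\delta$ definition of the limit, using that $F$ and $\sigma(F)$ agree on $M$-points near $a$, you only obtain $|\sigma(L)-L|<\epsilon$ for every positive $\epsilon\in M$, i.e.\ that $\sigma(L)$ realises the same cut over $M$ as $L$ --- which is no information precisely in the case you must exclude, namely when that cut is irrational. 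So the Galois-invariance step does not place $L$ in $\dcl^{\CL_0}(M\cup\{a\})$.

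The missing ingredient is a genuine descent theorem, and it is what the paper invokes (Fact~\ref{HaOn}, i.e.\ \cite[Lemma 1.3]{HaOn2}): an $\CN_0$-definable function mapping $M$ into $M$ agrees, on $M$ intersected with each of finitely many $\CN_0$-definable intervals covering $M$, with an $\CM|\CL_0$-definable function. Applied on an interval whose closure contains $a$, this yields an $\CM|\CL_0$-definable $g$ with $\lim_{x\to a^+}f(x)=\lim_{x\to a^+}g(x)$, and o-minimality of $\CM|\CL_0$ puts that limit in $M\cup\{\pm\infty\}$. Your intuition that ``$F$ is determined by $f|_M$'' is correct inside $\CN_0$ (two $\CN_0$-definable continuous functions agreeing on a subset dense in an interval of $N_0$ agree on that whole interval), but ``uniquely determined by data over $M$'' is strictly weaker than ``definable over $M$''; bridging the two is exactly the content of the cited lemma and cannot be replaced by the automorphism argument as stated.
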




The connection with definable Skolem functions is established by: 

\begin{introprop}\label{thm3} A weakly o-minimal non-valuational expansion of an ordered group is o-minimal if and only if it has  no external limits and, after naming a non-zero constant, admits definable Skolem functions.
\end{introprop}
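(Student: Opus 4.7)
The forward direction is classical: an o-minimal expansion of an ordered group has one-sided limits of definable unary functions in $M\cup\{\pm\infty\}$, and Pillay's definable choice theorem provides Skolem functions after naming any non-zero element. The rest of the plan is devoted to the converse.

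Suppose $\CM$ is weakly o-minimal non-valuational with no external limits and definable Skolem functions after naming a constant, and aim for a contradiction under the assumption that $\CM$ is not o-minimal. Then, by weak o-minimality, some definable convex subset of $M$ has as its sup (or, symmetrically, its inf) a cut $c\in\overline{M}\setminus M$, where $\overline{M}$ denotes the Dedekind completion. The non-valuational hypothesis now translates into the usable form: for every positive $\epsilon\in M$ there exist $x,y\in M$ with $c-\epsilon<x<c<y<c+\epsilon$.

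Pick $a,b\in M$ with $a<c<b$. The key idea is to use Skolem on the two definable families
\[
X_t=\{x\in M:c-(t-a)<x<c\}\quad\text{and}\quad Y_t=\{x\in M:c<x<c+(b-t)\},
\]
indexed by $t\in(a,b)$. Definability follows from the fact that the shifted cuts $c-(t-a)$ and $c+(b-t)$ are obtained from $c$ by the group operation, and non-emptiness of every $X_t,Y_t$ is exactly the non-valuational property applied with $\epsilon=t-a$ and $\epsilon=b-t$. Definable Skolem (in the structure with the named constant) yields definable $f,g:(a,b)\cap M\to M$ with $f(t)\in X_t$ and $g(t)\in Y_t$; the squeezes $c-(t-a)<f(t)<c$ and $c<g(t)<c+(b-t)$ then force $\lim_{t\to a^+}f(t)=c$ and $\lim_{t\to b^-}g(t)=c$ in $\overline{M}$ (the limits exist in $\overline{M}\cup\{\pm\infty\}$ by the strong monotonicity theorem, cf.\ \cite[Lemma 1.3]{Wenc1}).

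Finally, fix any $m\in(a,b)\cap M$ and glue: $\Phi(t)=f(t)$ for $t\le m$, $\Phi(t)=g(t)$ for $t>m$. Then $\Phi:(a,b)\cap M\to M$ is definable and both one-sided limits at $a$ and at $b$ equal $c\notin M\cup\{\pm\infty\}$, contradicting the no-external-limits hypothesis. The main point requiring care is the definability and uniform non-emptiness of the families $X_t,Y_t$—this is precisely where the non-valuational hypothesis enters in its essential form, and once this is verified the contradiction is immediate; there is no need to invoke any deeper structural theory of weakly o-minimal non-valuational expansions beyond strong monotonicity.
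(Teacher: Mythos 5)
Your proof is correct and follows essentially the same route as the paper: the paper's Lemma 2.10 applies a Skolem function to the formula $\phi_C(x):=\exists y(x>0\land y\in C\land x+y\notin C)$ for a definable irrational non-valuational cut $C$, which is exactly your family $X_t$ reparametrized over $(0,\infty)$, and the squeeze forces the external limit $\sup(C)$. Your two-sided gluing of $f$ and $g$ is a harmless extra precaution (it also rules out the literal ``at least one limit exists'' reading of the definition) but is not needed for the paper's argument.
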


From this we conclude that o-minimal traces do not have definable Skolem functions. In fact,  since having no external limits is preserved under elementary equivalence and under the passage to ordered reducts, we obtain a stronger result: 

\begin{introtheorem}\label{main}
 If $\CM$ is elementarily equivalent to an ordered group reduct\footnote{This means that the order and the group structure are preserved in the reduct.} of an o-minimal trace, then $\CM$ has no definable Skolem functions.
\end{introtheorem}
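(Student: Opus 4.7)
My plan is to derive the theorem from Propositions~\ref{thm1} and~\ref{thm3} by pushing the property of having no external limits through reducts and elementary equivalence. Let $\CN$ be an o-minimal trace and $\CN'$ an ordered group reduct of $\CN$ with $\CM\equiv\CN'$. For the reduct step, every function definable in $\CN'$ is definable in $\CN$, so Proposition~\ref{thm1} immediately yields that $\CN'$ has no external limits. For the passage along $\equiv$, I would observe that ``no external limits'' is expressible as a first-order scheme: for each formula $\phi(x,y,\bar z)$, include a sentence stating that whenever $\phi(\cdot,\cdot,\bar z)$ is the graph of a function on an interval, its one-sided limits at the endpoints lie in $M\cup\{\pm\infty\}$. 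This transfers the property from $\CN'$ to $\CM$.

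Next I would verify the remaining hypotheses of Proposition~\ref{thm3}: namely that $\CM$ is a weakly o-minimal non-valuational expansion of an ordered group. The order and group operation are retained by the footnote convention on the reduct; weak o-minimality and non-valuationality descend from $\CN$ to $\CN'$ because any definable-in-reduct witness (a definable set that is not a finite union of convex sets, or a definable proper convex subgroup) would be a definable-in-$\CN$ witness. Both properties are then preserved under $\equiv$ since the theory of $\CN$ is weakly o-minimal. Finally, naming a non-zero constant preserves each of these properties.

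With these preservation steps in place, I would apply the contrapositive of Proposition~\ref{thm3}. If $\CM$ admitted definable Skolem functions, then after naming a non-zero constant Proposition~\ref{thm3} would force $\CM$, and hence $\CN'$, to be o-minimal. The main obstacle is to complete the contradiction: one must use that the ordered group reduct $\CN'$ retains from the trace $\CN$ a witness to non-o-minimality---typically a definable cut with no endpoint in $M$, which is the defining non-o-minimal feature of an o-minimal trace and is visible already at the level of the ordered group reduct. Then $\CN'$ is not o-minimal, and neither is $\CM$, yielding the desired contradiction.
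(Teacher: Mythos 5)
Your argument follows the paper's own route almost step for step: the paper proves Theorem \ref{main} as Corollary \ref{DC2}, by combining Corollary \ref{nEL} (o-minimal traces, and more generally expansions of o-minimal groups by externally definable sets, have no external limits) with the observation that having no external limits, weak o-minimality and non-valuationality are all preserved under ordered group reducts and under elementary equivalence, and then invoking Proposition \ref{DC} (your Proposition \ref{thm3}). Your preservation arguments -- definability in the reduct implies definability in the trace, and ``no external limits'' is an elementary scheme -- are exactly the ones the paper relies on, and they are fine.

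The one place you go beyond the paper is the final step, and that is where there is a genuine gap. You claim that the ordered group reduct $\CN'$ necessarily ``retains a witness to non-o-minimality'', namely a definable irrational cut visible already at the level of the reduct. This is not justified and is false in general: a reduct is only required to keep the order and the group operation, so one admissible ordered group reduct of the trace is the underlying o-minimal group itself, which is o-minimal and (after naming a positive constant) \emph{does} have definable Skolem functions. Hence the literal statement of Theorem \ref{main} cannot be proved without excluding this degenerate case; the paper's precise formulation, Corollary \ref{DC2}, is the biconditional ``$\CM$ has definable Skolem functions if and only if $\CM$ is o-minimal'', which is exactly what your transfer argument plus Proposition \ref{thm3} delivers with no further work. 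To repair your write-up, either prove that biconditional form, or add the (intended but unstated) hypothesis that $\CM$ is not o-minimal -- equivalently, that the reduct still defines at least one irrational non-valuational cut -- at which point Lemma \ref{DC1} closes the argument just as in the proof of Proposition \ref{DC}.
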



The following statement is appealing and appears to be open.

\begin{conjecture}\label{conj}
No strictly weakly o-minimal non-valuational expansion of an ordered group has  definable Skolem functions.
\end{conjecture}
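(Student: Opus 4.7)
The natural strategy is to reduce Conjecture \ref{conj} to a strengthening of Proposition \ref{thm1}, namely the claim that \emph{every} weakly o-minimal non-valuational expansion of an ordered group has no external limits. Granting this claim, suppose $\CM$ is a strictly weakly o-minimal non-valuational expansion of an ordered group that, after naming a non-zero constant, admits definable Skolem functions. Since by the claim $\CM$ has no external limits, Proposition \ref{thm3} forces $\CM$ to be o-minimal, contradicting strict weak o-minimality. (Naming a non-zero constant is harmless, since it affects neither strict weak o-minimality nor non-valuationality.) Thus the whole question is pushed onto the ``no external limits'' claim.

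To attack the claim I would argue by contradiction: let $f\colon (a,b)\to M$ be definable with $\alpha := \lim_{t\to a^+} f(t)\in \hat M\setminus M$, the Dedekind completion. After restricting to a suitable subinterval, the strong monotonicity theorem (cf.\ \cite[Lemma~1.3]{Wenc1}) shows that $f$ is continuous and strictly monotone near $a$, so $\alpha$ is a well-defined irrational cut of $M$. The goal is then to use the group structure to turn this cut into a contradiction with non-valuationality.

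A first line of attack is to consider the set of translates $\{c\in M : f(t)+c \text{ realises the same cut as } \alpha \text{ for } t \text{ close enough to } a\}$ and try to extract from it a definable non-trivial proper convex subgroup of $M$, contradicting non-valuationality. A second, complementary, line of attack is via the theory of definable cuts in the non-valuational setting developed by Wencel \cite{Wenc2}: every definable cut of $M$ should essentially be of the form $c+D$ for some $c\in M$ and some rigid ``infinitesimal'' cut $D$, and one would try to show that cuts realised as one-sided limits of definable functions cannot contain such an infinitesimal component unless the function extends continuously to $a$ with value in $M$.

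The main obstacle is precisely this last step. In the o-minimal trace case, Proposition \ref{thm1} is proved by lifting $f$ to the ambient o-minimal structure $\CN_0$, where monotonicity and continuity pin the limit down inside $N_0$, and then using density of $M$ in $N_0$. Without such an ambient structure there is no obvious mechanism forcing $\alpha$ to lie in $M$: the example constructed in Section 3 of a non-valuational structure that is not even a reduct of an o-minimal trace shows that one cannot bypass this difficulty by an external o-minimal cover. Any proof of the conjecture will therefore presumably require a genuinely intrinsic argument inside $\CM$, combining the strong cell decomposition available in the non-valuational setting with a detailed analysis of which cuts of $M$ can occur as one-sided limits of definable functions in an expansion of an ordered group.
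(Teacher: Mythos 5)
There is nothing in the paper to compare against: the statement is Conjecture \ref{conj}, which the authors explicitly leave open, so any ``proof'' must be judged on its own. Your reduction is sound and is essentially the one the paper itself gestures at: granting that \emph{every} weakly o-minimal non-valuational expansion of an ordered group has no external limits, Proposition \ref{thm3} (whose proof goes through Lemma \ref{DC1}) immediately yields the conjecture, and naming a constant is indeed harmless. In fact, by Lemma \ref{DC1} the conjecture is equivalent to the a priori weaker statement that no such structure simultaneously has external limits \emph{and} definable Skolem functions, so you are free to aim at the stronger ``no external limits'' claim --- but you do not prove it, and that claim is precisely the open content of the conjecture. The proposal is therefore a correct reformulation plus a research plan, not a proof.

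Beyond the admitted gap, your first line of attack appears to rest on a misidentification of the obstruction. Having external limits means that the cut $\alpha=\lim_{t\to a^+}f(t)$ is \emph{irrational} over $M$; it does not mean $\alpha$ is valuational. In a non-valuational structure every definable cut is automatically non-valuational (Fact \ref{nval}), and every \emph{strictly} weakly o-minimal non-valuational structure possesses definable irrational non-valuational cuts --- so the mere existence of such a cut $\alpha$ cannot contradict non-valuationality. Concretely, for a non-valuational cut $(C,D)$ the stabiliser $\{c\in M: C+c=C\}$ is trivial (since $\inf\{y-x: x\in C,\ y\in D\}=0$), so the set of translates you propose cannot yield a non-trivial proper convex subgroup. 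What must be ruled out is not the existence of the cut $\alpha$ but the possibility that it is \emph{approached} by a definable $M$-valued function; in the trace case this is done by lifting $f$ to the ambient o-minimal structure (Fact \ref{HaOn} and Lemma \ref{NoEL}), and as you correctly note, the example of Section 3 shows that no such ambient structure is available in general. The second line of attack is too vague to evaluate; as stated, the conjecture remains open after your argument.
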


As mentioned above, this conjecture would imply that no strictly weakly o-minimal structure has definable choice. The above conjecture is tightly connected with such questions as the existence of weakly o-minimal non-valuational structures that are not elementarily equivalent to reducts of o-minimal traces, to a better understanding of the notion of external limits, and to generalisations of the theory of dense pairs. These questions are beyond the scope of the present work. 

Section 4 of our note is dedicated to the study of atomic models and elimination of imaginaries in the weakly o-minimal non-valuational setting. We start with a hands-on proof of: 
\begin{introprop}
	Let $\CM$ be an ordered group reduct of an o-minimal trace. Then $\mathrm{Th}(\CM)$ does not have atomic models over arbitrary sets. 
\end{introprop}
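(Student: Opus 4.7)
The plan is to exhibit a finite $A\subseteq M$ together with a consistent $A$-formula $\varphi(x)$ admitting no isolated completion in $S_1(A)$; since density of isolated types in $S_1(A)$ is necessary for an atomic model over $A$, this will suffice.

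First I would use the strict weak o-minimality and non-valuationality of $\CM$ (the case of interest, as the pure ordered group reduct would be o-minimal and trivially admit atomic models) to fix a finite $A\subseteq M$ together with an $A$-definable convex set $C\subseteq M$ whose supremum $\alpha$ lies in the Dedekind completion of $M$ but not in $M$. Set $\varphi(x):=\forall y\bigl(y\in C\to x>y\bigr)$, asserting that $x$ is a strict upper bound for $C$. This is consistent: in a sufficiently saturated elementary extension $\CM^{\ast}\succ\CM$ one realises $\varphi$ by an element $c^{\ast}$ lying infinitesimally above $\alpha$ relative to $\dcl(A)$, i.e.\ above every element of $C$ but below every $A$-definable element of $M$ that exceeds $\alpha$.

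Next I would suppose, toward a contradiction, that some isolated $p\in S_1(A)$ extends $\varphi$, isolated by $\psi(x,\bar a)$ with $\bar a\in A$. By weak o-minimality and, if necessary, after refining $\psi$, one may assume $\psi(\CM,\bar a)$ is a single convex subset $D\subseteq(\alpha,+\infty)\cap M$, with some realisation $d\in D$. The goal is to show $\tp(c^{\ast}/A)\neq\tp(d/A)$ by producing an $A$-definable convex set $E$ containing $c^{\ast}$ but excluding $d$. Such an $E$ takes the form $\{x : x<\beta\}$ for a carefully chosen $A$-definable external cut $\beta$ with $\alpha<\beta<d$; both $c^{\ast}\in E$ (since $c^{\ast}$ sits just above $\alpha$) and $d\notin E$ (since $d>\beta$) then hold, contradicting that $\psi$ isolates a single type over $A$.

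The main obstacle is producing the separating external cut $\beta$ within the given ordered-group reduct. In the full o-minimal trace this is immediate from the dense pair $(\CN_{0},\CM_{0})$: every element of $N_{0}\setminus M$ in the interval $(\alpha,d)$ gives rise to an $\CM$-definable external cut, and by density of $M_{0}$ in $N_{0}$ such elements abound. The delicate point is to verify that enough of the family of external cuts survives in the reduct under consideration; this is where the hypothesis that $\CM$ is an ordered-group reduct of an o-minimal trace -- rather than an abstract weakly o-minimal non-valuational expansion of a group -- plays an essential role, and is the reason the argument is carried out in this specific setting by ``hands-on'' means.
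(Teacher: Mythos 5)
Your general strategy (exhibit a consistent formula over some $A$ with no isolated completion) is sound in principle, but the execution has two genuine gaps, and the second is exactly where the paper's actual work lies. First, the formula $\varphi(x):=$ ``$x$ is a strict upper bound for $C$'' will in general \emph{have} isolated completions: any element of $\dcl(A)$ lying above $\alpha$ realizes an algebraic, hence isolated, completion of $\varphi$, and for a typical finite $A$ such elements exist (in $\Ralg^\pi$ with $C=\{x<\pi\}$ and $A=\0$, the point $4\in\dcl(\0)$ already does it). Relatedly, your contradiction argument does not close: if $\psi$ isolates \emph{some} completion $p$ of $\varphi$, separating your chosen realization $c^{*}$ from a realization $d$ of $\psi$ only shows that $\psi$ does not isolate $\tp(c^{*}/A)$; since nothing forces $c^{*}\models\psi$, this does not contradict the isolation of $p$. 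What is actually needed is that \emph{every} completion of the chosen formula is non-isolated, and since (by weak o-minimality plus the group structure) isolated types over $A$ are algebraic, this amounts to choosing a consistent formula with no realization in $\dcl_{\CM}(A)$.

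That reduction exposes the real content, which your proposal leaves untouched: one must produce a set $A$ such that $\dcl_{\CM}(A)$ fails to be dense at the definable cut $C$, even though $M$ is dense at $C$ (so that, say, $x\in C\wedge x>a$ is consistent but omitted by $\dcl_{\CM}(A)$). A generic finite $A$ will not work, since $\dcl(A)$ is typically still dense at $C$. The paper builds $A$ as an infinite set $N_0\cup\{a\}$: first a countable $\CN_0\prec\CM_0$ over which the cut of $c$ remains irrational and non-valuational, then a point $a\in M$ realizing the same $\CL_0$-cut as $c$ over $N_0$, which forces $c$ to become infinitesimally close to $a$ over $\CN_0(a)$ and hence makes $C$ a rational cut over $\dcl_{\CM}(A)$. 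Finally, one must control $\dcl_{\CM}$ (in the weakly o-minimal expansion, not merely the o-minimal reduct); the paper gets $\dcl_{\CM}(\CN_0(a))=\CN_0(a)$ from van den Dries's description of definable sets in dense pairs. Your closing paragraph gestures at ``producing the separating external cut $\beta$'' as the delicate point, but the actual obstructions are the choice of $A$, the identification of the right (omitted) formula on the $C$-side of the cut, and the computation of $\dcl_{\CM}(A)$ --- none of which appear in the proposal.
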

On the positive side, we show: 
\begin{introtheorem}
	Let $\CM$ be a weakly o-minimal non-valuational structure. Then $\CM$ eliminates imaginaries up to $\0$-definable families of definable cuts. 
\end{introtheorem}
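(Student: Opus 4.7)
The plan is to reduce the statement to a canonical-coding property for $\emptyset$-definable families: for every $\emptyset$-definable family $\{D_t\}_{t\in T}$ of definable subsets of $M^n$, produce a $\emptyset$-definable map $t\mapsto c(t)$ taking values in a product of copies of $M$ and of $\emptyset$-definable families of definable cuts, with $c(t)=c(t')$ if and only if $D_t=D_{t'}$. Standard model-theoretic reductions then pass from this uniform coding statement to elimination of arbitrary imaginaries.

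The construction of $c$ proceeds by induction on $n$, using the strong cell decomposition theorem for non-valuational weakly o-minimal expansions of ordered groups (Wencel). After a $\emptyset$-definable partition of $T$, one may assume every $D_t$ admits a cell decomposition of a fixed combinatorial type, varying uniformly in $t$. For $n=1$, the code is simply the ordered tuple of left and right endpoint-cuts of the convex components of $D_t$, all lying in a single $\emptyset$-definable family of cuts. For $n>1$, the code of $D_t$ consists of the code of the projection $\pi(D_t)\subseteq M^{n-1}$ (obtained by induction) together with, for each cell $C$ in the canonical decomposition of $\pi(D_t)$, the pair of boundary functions into the Dedekind completion of $M$ that define $D_t$ above $C$. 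These boundary functions are themselves coded by a further appeal to the inductive hypothesis applied to their graphs.

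The main obstacle is canonicity: a cell decomposition is not intrinsically unique, and distinct decompositions would break injectivity of $c$. I plan to overcome this by extracting intrinsic data using the non-valuational hypothesis: for each fixed $\bar x\in M^{n-1}$, the fiber $(D_t)_{\bar x}\subseteq M$ is a finite union of convex sets whose boundary cuts depend only on $D_t$. The assignment sending $\bar x$ to the tuple of boundary cuts of the fiber therefore depends only on $D_t$ and induces a canonical partition of $\pi(D_t)$ into maximal pieces over which this tuple varies \emph{continuously}. Using this canonical partition as the base of the inductive construction yields a truly canonical code and completes the argument. The non-valuational hypothesis is essential here, consonant with the earlier negative results of the paper, since in the valuational case the cosets of a proper definable convex subgroup would obstruct any such coding.
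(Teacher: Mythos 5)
Your strategy---a direct inductive coding of $\0$-definable families by tuples of elements and cuts, extracted from strong cell decomposition---is genuinely different from the paper's, but as described it contains a circularity at its key step. You code $D_t$ by the code of $\pi(D_t)\subseteq M^{n-1}$ together with the boundary functions of $D_t$ over each canonical piece, and you propose to code those functions ``by a further appeal to the inductive hypothesis applied to their graphs.'' But a boundary function $g\colon A\to\overline M$ with $A\subseteq M^{n-1}$ is an infinite object, not a finite tuple; to code it you must code its subgraph $\{(\bar x,y): y<g(\bar x)\}$, which is a definable subset of $M^n$ --- the same number of variables you started with --- so the induction on $n$ does not apply. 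Nor does the fiberwise analysis break the loop: by your own scheme the code of that subgraph is the code of $A$ together with its boundary function, which is $g$ again. The irreducible datum is the equivalence class of $g$ inside a $\0$-definable family $\{g_s\}_{s\in S}$ of functions into the completion, and classifying that class by a finite tuple of cuts is an instance of the very elimination statement being proved. It cannot be done by picking canonical representatives inside $\CM$ itself, since the first half of the paper shows precisely that definable choice fails there. (Separately, your ``canonical partition'' is only partially canonical: the locus where the tuple of fiber-boundary cuts varies continuously is a canonical set, but its decomposition into ``maximal pieces'' is not, and the lower-dimensional discontinuity locus forces an auxiliary induction on dimension rather than on the number of variables. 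This is repairable; the coding of the boundary functions is the essential gap.)

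The paper breaks exactly this circularity by moving to the canonical o-minimal completion $\CM_0^*$ on the set $\overline M$ of definable cuts (Fact~\ref{EBY} and Fact~\ref{NiceC}, from \cite{EBY}): $\CM_0^*$ is an o-minimal expansion of an ordered group, hence \emph{does} have definable choice, and one obtains codes by applying definable choice there to the family of closures $\cl_{\overline M}(E_x)$. The only substantive point remaining is injectivity on classes, namely that $\cl_{\overline M}(C)=\cl_{\overline M}(D)$ forces $C\cap D\neq\0$, which is proved by a dimension count together with strong cell decomposition. To salvage your approach you would need to import something equivalent---either definable choice in an o-minimal structure living on the cuts, or an independent proof that $\0$-definable families of functions into $\overline M$ are classified by finite tuples of cuts. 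As written, the inductive step does not go through.
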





We note that while this paper was being revised, an independent work Laskowski-Shaw \cite{LaskShaw} appeared, studying weakly o-minimal expansions of ordered groups by a single cut. They show that the structure has definable Skolem functions if and only if it is valuational. 



$ $\\
\emph{Acknowledgements.} We wish to thank Salma Kuhlmann for hosting Gil Keren in Konstanz in the Winter term 2013/2014, during which time this collaboration began. The visit was funded by the Erasmus Mundus EDEN consortium. We also thank Philipp Hieronymi and Moshe Kamensky for their important feedback.

\section{No definable Skolem functions}\label{sec-results}
The present section is dedicated to proving that ordered group reducts of o-minimal traces have no definable Skolem funcitions. For convenience we will assume, throughout, that all groups are equipped with a constant $1>0$. We start with a few preliminaries. 

As usual, if $\CM$ is an ordered structure, a \emph{cut} in $\CM$ is a partition $M=C\cup D$ where $C<D$. A cut $(C,D)$ is \emph{definable} if $C$ is. As a matter of convention, throughout this paper we will assume that if $(C,D)$ is a cut in $M$ then $C$ has no maximum. With these conventions we  let $\overline M$ be the set of all $\CM$-definable (with parameters) cuts ordered by $(C,D)<(C',D')$ if $C'\cap D\neq 0$. Throughout we will implicitly identify $M$ with a subset of $\overline M$ via $a\mapsto((-\infty, a), [a,\infty))$. 

Following the terminology of \cite{MarOmit}, a cut $(C,D)$ is rational if $D$ has a minimal element, and irrational otherwise. If $\CM$ expands an ordered group a cut $(C,D)$ is \emph{non-valuational} if $\inf\{y-x: x\in C, y\in D\}$ exists (in the sense of the structure $\CM$, of course). As pointed out to us by M. Kamensky, if the group is $p$-divisible for some $p$ then such an infimum, if it exists, must be $0$. As a canonical example, if $G$ is a definable proper, non-trivial, convex subgroup of $M$ then the formula $x>G$ defines a \emph{valuational} cut. Indeed, this is the typical case, as shows the following easy observation:  
\begin{fact}\label{nval}
	Let $\CM$ be a weakly o-minimal expansion of a group. Then $\CM$ is non-valuational if and only if no definable cut is valuational. 
\end{fact}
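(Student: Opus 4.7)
The plan is to prove both implications by contraposition.

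For ``$\CM$ valuational implies some definable cut is valuational,'' given a definable proper non-trivial convex subgroup $G\subseteq M$, form $D:=\{y\in M:y>G\}$ and $C:=M\setminus D$. The central point is that $\sup G\notin M$: were $s=\sup G\in M$, pick $g_0\in G$ with $g_0>0$ (which exists since $G\ne\{0\}$ and $G=-G$) and, by the supremum property, find $g\in G$ with $g>s-g_0$; then $g+g_0>s$, while $g+g_0\in G\subseteq M_{\le s}$, a contradiction. Consequently $C$ has no maximum in $M$, so $(C,D)$ is a cut in the sense of the paper. A short case-split (using $0\in G\subseteq C$ for $D\subseteq I$, and treating $x\in G$ versus $x<0$ for the reverse inclusion) shows that $I=\{y-x:x\in C,y\in D\}=D$, and its infimum in $\overline M$ is $\sup G\notin M$, so the cut is valuational.

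For ``some definable cut valuational implies $\CM$ valuational,'' suppose $(C,D)$ is a definable valuational cut and set
\[
H := \{h\in M : C+h=C\}.
\]
Then $H$ is $\CM$-definable and a subgroup (composing translations: $C+(h_1+h_2)=(C+h_1)+h_2=C$; and $C+h=C$ implies $C=C-h$). Convexity follows from the symmetry of $H$ together with the downward-closure of $C$: given $0\le h\le h_2\in H$, both $x+h\le x+h_2\in C$ and $x-h\le x\in C$ lie in $C$. Properness is immediate: if $H=M$, pick $x_0\in C$ and $y_0\in D$ and set $h:=y_0-x_0$; then $x_0+h=y_0$ would have to lie in $C$, a contradiction. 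The only substantive step is \emph{non-triviality}: if $H\cap M_{>0}=\emptyset$, then for every $h>0$ in $M$ some $x\in C$ satisfies $x+h\in D$, whence $h=(x+h)-x\in I$; hence $I=M_{>0}$ and $\inf I=0\in M$, contradicting the valuationality of the cut.

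The main obstacle is identifying the correct candidate $H=\{h\in M:C+h=C\}$ and noticing that valuationality is precisely what prevents $H$ from collapsing to $\{0\}$; all remaining verifications are routine manipulations using the downward-closure of the lower half of the cut.
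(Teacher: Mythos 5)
The paper states this Fact without proof (it is offered as an ``easy observation''), so there is no official argument to compare against; judged on its own, your proof is correct and complete. Your first direction is exactly the construction the authors themselves point to in the surrounding text (the cut $x>G$ attached to a definable proper non-trivial convex subgroup $G$), and your verification that $\sup G\notin M$ and that $\{y-x:x\in C,\ y\in D\}=D$ is sound. For the converse, the stabilizer $H=\{h\in M: C+h=C\}$ is the right object: it is definable, convex and symmetric, proper because $C\neq M$, and your non-triviality step correctly isolates where valuationality enters --- if $H=\{0\}$ then (using that $C$ is downward closed, so $C\subseteq C+h$ for $h>0$) every positive $h$ lies in $\{y-x:x\in C,\ y\in D\}$, forcing the infimum to be $0\in M$. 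The only points worth spelling out more explicitly are the inclusion $C\subseteq C+h$ needed to pass from $C+h\neq C$ to the existence of $x\in C$ with $x+h\in D$, and the tacit assumption that both halves of a cut are nonempty (which the paper's conventions make harmless).
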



\subsection{Non-valuational expansions of o-minimal groups}\label{sec-cuts}
As already mentioned, it follows from \cite{BaiPoi} that any expansion of an o-minimal structure by externally definable sets is weakly o-minimal. Since any set of cuts over an o-minimal structure is realised in some elementary extension, any expansion of an o-minimal structure by cuts is weakly o-minimal. 

We start by sharpening Fact \ref{nval}. Towards that end we remind that in \cite[Lemma 2.2]{MarOmit} it is shown that if $\CM$ is o-minimal, and $a$ realizes an irrational cut over $M$ then no  $b\in \CM(a)\setminus \CM$ realizes a rational cut over $\CM$. The following lemma is an analogue for non-valuational cuts. It follows from \cite{Wenc-exp}, but we provide a succinct proof.

\begin{lemma}
	Let $\CM$ be an o-minimal expansion of an ordered group. Let $(C,D)$ be an irrational non-valuational cut over $M$ and $C< a< D$ any realization. Then $\CM(a)$ does not realize any valuational cuts over $M$.
\end{lemma}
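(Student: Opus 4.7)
The plan is to argue by contradiction. Suppose some $b\in \CM(a)$ realizes a valuational cut $(C_b,D_b)$ over $M$; in particular $b\notin M$, since the only cut realized by an element of $M$ is rational, hence non-valuational. Note also $a\notin M$ by the irrationality of $(C,D)$. Write $b=f(a)$ for some $M$-definable unary function $f$. The goal is to propagate the non-valuationality of the cut of $a$ through $f$ so that $(C_b,D_b)$ must itself be non-valuational, a contradiction.

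First I would apply the o-minimal monotonicity theorem to $f$: on some $M$-definable open interval $I=(\alpha,\beta)\ni a$ the function $f$ is continuous and either constant or strictly monotone. The constant case gives $b\in M$, already excluded, so $f$ is strictly monotone on $I$; assume strictly increasing (the decreasing case is symmetric). Choose $\alpha',\beta'\in M$ with $\alpha<\alpha'<a<\beta'<\beta$, which is possible because $I$ is an open $M$-definable interval and $a\notin M$. On the closed bounded $M$-interval $[\alpha',\beta']$ I invoke the o-minimal analogue of Heine--Cantor: for every $\eta>0$ in $M$ there is $\delta>0$ in $M$ such that $s,t\in[\alpha',\beta']$ with $|s-t|<\delta$ imply $|f(s)-f(t)|<\eta$.

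Next I would use the hypothesis on $(C,D)$. Since $\CM$ is an o-minimal expansion of an ordered group and hence divisible, by the remark preceding Fact~\ref{nval} non-valuationality of $(C,D)$ gives $\inf\{y-x:x\in C,\,y\in D\}=0$ in $M$. Given $\eta>0$ in $M$ and the corresponding $\delta$, pick $x\in C$, $y\in D$ with $y-x<\delta$. Replacing $x$ by $\max(x,\alpha')$ (still in $C$ since $\alpha'<a$) and $y$ by $\min(y,\beta')$ (still in $D$ since $a<\beta'$) can only decrease $y-x$, so I may arrange $x,y\in[\alpha',\beta']$ while still keeping $y-x<\delta$. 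Uniform continuity then yields $|f(y)-f(x)|<\eta$, and monotonicity of $f$ gives $f(x)<b<f(y)$, so $f(x)\in C_b$ and $f(y)\in D_b$. Since $\eta$ was an arbitrary positive element of $M$, $\inf\{d-c:c\in C_b,\,d\in D_b\}=0$, contradicting the valuationality of $(C_b,D_b)$.

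The only delicate point I foresee is the uniform continuity step, where $\delta$ must be chosen inside $M$ rather than merely in the extension; this is precisely the standard o-minimal Heine--Cantor theorem for continuous definable functions on closed bounded intervals in an o-minimal group, a consequence of definable compactness. Everything else is the pull-back of witnesses through a monotone continuous $M$-definable function, combined with the small ``truncation'' observation that moving $x,y$ into $[\alpha',\beta']$ never enlarges the gap $y-x$.
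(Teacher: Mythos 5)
Your proof is correct and follows essentially the same route as the paper's: the monotonicity theorem, uniform continuity of $f$ on a closed bounded $M$-definable subinterval with endpoints in $M$, and transporting witnesses of the non-valuationality of $(C,D)$ through $f$. The only (harmless) difference is that by quantifying $\eta$ over $M$ rather than over $\CM(a)$ you avoid the paper's appeal to Marker's omitting-types lemma, which is indeed not needed to verify that the cut of $b$ over $M$ is non-valuational.
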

\begin{proof}
	Let $b\in \CM(a)$ be any element. Then there exists an $\CM$-definable function $f$ such that $b=f(a)$. By o-minimality and the fact that $a\notin M$ (because $(C,D)$ is an irrational cut) there is an $\CM$-definable interval $I$ containing $a$ such that $f$ is continuous and strictly monotone or constant on $I$. If $f$ is constant on $I$ then $b\in M$ and there is nothing to prove. So we may assume without loss of generality that $f$ is strictly increasing. Restricting $I$, if needed, we may also assume that $I$ is closed and bounded. So $f$ is uniformly continuous on $I$.

By \cite[Lemma 2.2]{MarOmit} the type $p\in S_1(M)$ of a positive infinitesimal element is not realized in $\CM(a)$. It follows, since $(C,D)$ is non-valuational, that for any $c\in C(\CM(a))$, $d\in D(\CM(a))$ and $0<\delta \in \CM(a)$ there are $c<c'<d'<d$ with $c'\in C(\CM)$ and $d'\in D(\CM)$ with $d'-c'<\delta$.  Given any $0<\epsilon \in \CM(a)$ take $0<\epsilon'<\epsilon$ with $\epsilon'\in \CM$ and let $\delta\in M$ be such that $|f(x)-f(y)|<\epsilon'$ for any $x,y\in I$ with $|x-y|<\delta$. So
\[\inf\{f(d)-f(c): c\in C(\CM(a)), d\in D(\CM(a)) \}=0.\]

Thus, $b$ realizes a non-valuational cut over $\CM$. Since $b\in \CM(a)$ is arbitrary, this finishes the proof of the lemma.
\end{proof}

We   also need the following observation.\footnote{We thank Y. Peterzil for pointing out this formulation.}

\begin{proposition}\label{propcuts}
	Let $\CM$ be an o-minimal expansion of an ordered group and $\mathcal C=\{(C_i, D_i)\}_{i\in I}$ a collection of irrational non-valuational cuts in $\CM$. Then there exists $\CM\prec \CN$ such that $M$ is dense in $N$ and $\CN$ realizes all cuts in $\mathcal C$.
\end{proposition}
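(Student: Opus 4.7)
The plan is to realize the cuts one by one along a transfinite elementary chain above $\CM$, preserving density of $M$ at every step. The key tool will be the preceding lemma together with \cite[Lemma 2.2]{MarOmit}, which says that if $\CM$ is an o-minimal expansion of a group and $(C,D)$ is an irrational cut realized by $a$, then $\CM(a)$ contains no positive infinitesimal over $\CM$.

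Well-order $\CC$ as $\{(C_\alpha,D_\alpha):\alpha<\kappa\}$, set $\CN_0=\CM$, and build an elementary chain $(\CN_\alpha)_{\alpha\le\kappa}$. At a successor stage $\alpha+1$, extend $(C_\alpha,D_\alpha)$ to a cut of $\CN_\alpha$ with lower part $\{x\in N_\alpha:x<D_\alpha\}$. If this extended cut is already realized in $\CN_\alpha$, set $\CN_{\alpha+1}=\CN_\alpha$. Otherwise it is irrational and remains non-valuational (the infimum witnessing non-valuationality in $M$ continues to witness it in $N_\alpha\supseteq M$); pick a realization $a_\alpha$ in some elementary extension and put $\CN_{\alpha+1}=\dcl_{\CN_\alpha}(N_\alpha\cup\{a_\alpha\})$, which is an elementary extension of $\CN_\alpha$ by o-minimality. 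At limit stages take unions, and set $\CN=\CN_\kappa$; by construction $\CN\succ\CM$ realizes every cut in $\CC$.

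It remains to verify, by transfinite induction, that $M$ is dense in each $\CN_\alpha$. The limit case is immediate. For the successor step, granted density of $M$ in $\CN_\alpha$, it suffices to show that $\CN_\alpha$ itself is dense in $\CN_{\alpha+1}$, which in turn reduces to proving that distinct elements of $\CN_{\alpha+1}$ realize distinct cuts over $\CN_\alpha$. Suppose for contradiction $u<v$ in $\CN_{\alpha+1}$ realize the same cut over $\CN_\alpha$. The preceding lemma applied to $\CN_\alpha$ (o-minimal as an elementary extension of $\CM$) gives that the cut of $u$ over $\CN_\alpha$ is non-valuational, so for every $\epsilon>0$ in $N_\alpha$ there exists $d\in N_\alpha$ with $u<d<u+\epsilon$. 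The same-cut hypothesis forces $d\ge v$, hence $v-u<\epsilon$. As $\epsilon$ was arbitrary, $v-u$ is a positive infinitesimal over $\CN_\alpha$, contradicting \cite[Lemma 2.2]{MarOmit}. Once distinct cuts are in hand, a quick case analysis produces an interpolating element of $\CN_\alpha$ between $u$ and $v$, and the inductive hypothesis then gives an element of $M$ between them.

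The main obstacle is exactly this density-preservation argument at successor steps; the remaining ingredients---extending the cut, verifying irrationality and non-valuationality of the extension, gluing at limits, and deducing interpolation from distinctness of cuts---are routine. The crux is the tension between the preceding lemma, which supplies arbitrarily close $\CN_\alpha$-approximations to any new element of $\CN_{\alpha+1}$, and the no-infinitesimals statement from \cite{MarOmit}, which prevents distinct new elements from sitting at infinitesimal distance.
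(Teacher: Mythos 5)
Your proof is correct and takes essentially the same route as the paper's: a transfinite elementary chain realizing the cuts one at a time, with density of $M$ preserved at each successor stage by combining Marker's no-infinitesimals lemma with the preceding lemma's guarantee that every new element realizes a non-valuational (and irrational) cut over the previous stage. The only cosmetic difference is that you argue density via ``distinct elements realize distinct cuts'' rather than the paper's direct interpolation, but the ingredients and their use are identical.
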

\begin{proof}
	Let $\{(C_i, D_i)\}_{i\in I}$ be a collection of cuts as in the assumption, and $p\in S_1(M)$ the type of a positive infinitesimal. We construct $\CN$ by induction as follows. For $i=1$ let  $\CM_1:=\CM(a_1)$ where $a_1\models (C_1, D_1)$ is any realization. By \cite[Lemma 2.2]{MarOmit} $p$ is not realized in $\CM_1$. Let $a<b\in M_1$ be any elements. Then there exists $r\in M$ such that $0<r<b-a$ (otherwise $b-a\models p$ ). By the previous lemma $a$ realises a non-valuational cut, so there exists $a'\in M$ such that $a'+\frac{r}{2}>a$. Thus $a'+\frac{r}{2}\in (a,b)\cap M$. Since $a,b$ were arbitrary this shows that $M$ is dense in $M_1$. 
	
	Assume now that for all $j<i$ we have constructed $\CM_j$ such that $(C_j, D_j)$ is realized in $\CM_{j+1}$ and such that $M$ is dense in $M_j$ (so in particular, $\CM_j$ does not realise $p$). If $i$ is a successor ordinal we let $\CM_{i, 0}=\CM_{i-1}$ and if $i$ is limit we let $\CM_{i,0}:=\bigcup_{j<i} \CM_i$.  Note that as density is preserved under passing to the limit,  by induction, $M$ is dense in $M_{i,0}$.  Finally, if $(C_i, D_i)$ is realized in $\CM_{i,0}$ set $\CM_{i}=\CM_{i,0}$. Otherwise set $\CM_i:=\CM_{i,0}(a_i)$ where $a_i\models (C_i, D_i)$ is any realization. 
	
	We prove that $\CM_{i}$ does not realize $p$. If $\CM_i=\CM_{i,0}$ this follows from the induction hypothesis. So we assume that this is not the case. Thus $(C_i(\CM_{i,0}), D_i(\CM_{i,0}))$ defines a cut in $\CM_{i,0}$, and as by induction $M$ is dense in $M_{i,0}$ this cut is still non-valuational. Thus, applying \cite[Lemma 2.2]{MarOmit} again the desired conclusion follows. As in the induction base, it follows that $M$ is dense in $M_i$. 

	Setting $\CN:=\bigcup_{i\in I} \CM_i$, by construction, $\CN$ realizes all $(C_i,D_i)_{i\in I}$ and, by induction $M$ is dense in $N$, as required. 
\end{proof}

From the above result we can deduce the following which, though not needed in this paper, may be of independent interest: 
\begin{corollary}
	Let $\CM$ be an o-minimal expansion of a group. Then there exists a maximal elementary extension $\CM\prec \CN$ such that $M$ is dense in $N$. 
\end{corollary}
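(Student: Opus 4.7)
The plan is to realize all potential obstructions to density in a single extension. Concretely, apply Proposition \ref{propcuts} with $\mathcal{C}$ the collection of \emph{all} irrational non-valuational cuts of $\CM$, obtaining an elementary extension $\CM \prec \CN$ in which $M$ is dense in $N$ and every cut in $\mathcal{C}$ is realized. The claim is that this $\CN$ is already the required maximal extension: no proper elementary extension $\CN \prec \CN'$ can keep $M$ dense in $N'$.

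To establish maximality, suppose for contradiction that such an $\CN'$ exists, pick $a \in N' \setminus N$, and examine the cut $(C, D)$ of $a$ over $M$. First, $(C, D)$ must be non-valuational: otherwise, with $\delta := \inf(D - C) > 0$, density of $M$ in $N'$ produces $m_1, m_2 \in M$ satisfying $a - \delta/2 < m_1 < a < m_2 < a + \delta/2$, giving $m_2 - m_1 < \delta$, a contradiction. Second, $(C, D)$ must be irrational: if $m_0 := \min D$ existed in $M$, then the divisibility (hence dense ordering) of the underlying o-minimal group supplies some $b \in N'$ with $a < b < m_0$, and density of $M$ in $N'$ then produces an $M$-point strictly between $a$ and $m_0$, contradicting $m_0 = \min D$.

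Hence $(C, D) \in \mathcal{C}$, and so is already realized in $N$ by some $a_0 \neq a$. Assuming without loss of generality $a < a_0$, density of $M$ in $N'$ yields $m \in M$ with $a < m < a_0$; but $a_0$ realizes the same cut over $M$ as $a$, so $m < a_0$ forces $m \in C$, contradicting $m > a$. This completes the proof of maximality.

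The main subtlety is conceptual: one must recognise that valuational, rational, and irrational non-valuational cuts exhaust the possibilities for $(C, D)$, and each must be obstructed. The first two are excluded automatically from density combined with divisibility of $M$; the third is handled by Proposition \ref{propcuts} precisely because $\mathcal{C}$ is taken as large as possible. Once these three obstructions are pre-empted simultaneously, maximality follows without any transfinite bookkeeping beyond that already contained in Proposition \ref{propcuts}.
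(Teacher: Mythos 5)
Your proof is correct and takes essentially the same route as the paper: apply Proposition \ref{propcuts} to the family of \emph{all} irrational non-valuational cuts of $\CM$, and then, for any $a$ in a proper further elementary extension, use density of $M$ to rule out in turn the valuational, rational, and already-realized irrational non-valuational possibilities for the cut of $a$ over $M$. One cosmetic point: for a valuational cut $\inf\{d-c : c\in C,\ d\in D\}$ does not exist in $M$ by definition, so your $\delta$ should be taken to be any \emph{positive lower bound} of that set (which exists precisely because the cut is valuational and $0$ is therefore not its infimum); with that rewording the rest of your argument goes through verbatim.
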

\begin{proof}
	Let $(C_i,D_i)_{i\in I}$ enumerate all non-valuational irrational cuts in $M$. Let $\CN$ be the structure provided by the previous proposition with respect to this collection of cuts.  
	
	Let $\CN_1\succ \CN$ be any proper extension and $a\in N_1\setminus N$. Let $p:=\tp(a/N)$. We may assume that $N$ is dense in $N_1$ (otherwise $M$ is certainly not dense in $N_1$ and we have nothing to prove). So $p$ is non-valuational. Since $M$ is dense in $N$ it is also non-valuational in $M$. If $p$ is irrational over $N$ it is irrational over $M$, and so realised in $N$, which is impossible. So $p$ is rational, and there exists $a'\in N$ such that $p$ is an infinitesimal near $a'$. But then, say, $(a'-a)\cap N=\0$, so $N_1$ is not dense in $M$. 
\end{proof}

Returning to our main argument we can now deduce Proposition \ref{expansion}.

\begin{proposition}
	Let $\CM$ be an o-minimal expansion of an ordered group. Then any expansion of $\CM$ by non-valuational cuts is an o-minimal trace.
\end{proposition}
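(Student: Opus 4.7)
The plan is to realize all $\CM'$-definable irrational cuts in an o-minimal elementary extension $\CN_0 \succeq \CM|\CL_0$ in which $M$ remains dense, and then identify the structure induced on $M$ from $\CN_0$ with $\CM'$.

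First, by Fact \ref{nval}, every $\CM'$-definable cut is non-valuational. Rational cuts are already $\CM$-definable, and every cut definable in $\CM'$ is automatically a predicate of $\CM'$; hence without loss of generality $\CM'$ is obtained from $\CM$ by adjoining, as unary predicates, the family $\mathcal{C}$ of \emph{all} irrational $\CM'$-definable cuts of $M$. Applying Proposition \ref{propcuts} to $\mathcal{C}$ yields an o-minimal extension $\CN_0 \succ \CM|\CL_0$ in which $M$ is dense in $N_0$ and every $C \in \mathcal{C}$ is realized by some $a_C \in N_0$. The candidate dense pair is then $(\CN_0, M)$.

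It remains to check that the structure induced on $M$ from $\CN_0$ coincides with $\CM'$. The inclusion $\CM' \subseteq$ (induced) is immediate: each $C \in \mathcal{C}$ is the trace $\{x \in M : x < a_C\}$. For the reverse direction, which is the main obstacle, I would first handle the one-variable case: by o-minimality of $\CN_0$ an $\CN_0$-definable subset of $M$ is a finite Boolean combination of intervals whose endpoints are of the form $h(\bar b)$ for some $\CM$-definable $h$ and tuple $\bar b \in N_0^k$. When $k=1$ and $\bar b = a_C$ with $C \in \mathcal{C}$, the identity $\{y \in M : y < h(a_C)\} = \{y \in M : h^{-1}(y) \in C\}$ on a monotonicity piece of $h$ shows this endpoint cut is $\CM'$-definable. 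The multi-variable case is handled by iterating this analysis, using density of $M$ in $N_0$ and local continuity of $\CM$-definable functions to peel off one coordinate at a time.

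For higher-dimensional induced sets I would apply o-minimal cell decomposition in $\CN_0$ over the parameters $\bar b$, reducing every $\CN_0$-definable subset of $M^n$ to cells whose boundaries are graphs of $\CM$-definable functions evaluated at $\bar b$, and then invoke the one-variable result fiberwise. The technical heart of the argument is the multi-variable closure of $\mathcal{C}$ under $\CM$-definable operations applied to realizations of cuts in $\mathcal{C}$: this is ultimately forced by the choice of $\mathcal{C}$ as the entire family of $\CM'$-definable cuts, but extracting it concretely requires carefully tracking how density of $M$ in $N_0$ and continuity of $\CM$-definable maps interact with the o-minimal geometry of $\CN_0$.
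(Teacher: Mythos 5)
Your overall skeleton matches the paper's: apply Proposition \ref{propcuts} to realize the cuts in a dense elementary extension $\CN_0$, then show the structure induced on $M$ equals $\CM'$. But the technical heart --- the step you explicitly defer --- is precisely where the proof lives, and your plan for it does not work as stated. The problematic objects are cuts of the form $\{x\in M : x< f(a_1,\dots,a_n)\}$ with $f$ an $\CM$-definable continuous function and $a_j\models C_{i_j}$. Your proposal to ``peel off one coordinate at a time'' fails because once you fix, say, $a_2,\dots,a_n$, the one-variable function $x_1\mapsto f(x_1,a_2,\dots,a_n)$ is definable only over $N_0$, not over $M$, so your one-variable identity $\{y : y<h(a_C)\}=\{y : h^{-1}(y)\in C\}$ (which needs $h$ to be $\CM$-definable) no longer applies; moreover, the intermediate cuts produced by such an iteration live over $\CM(a_2,\dots,a_n)$ rather than over $M$, so there is no meaningful sense in which they are ``$\CM'$-definable''. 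The paper treats all variables simultaneously: choosing $n$ minimal, $f$ is strictly monotone in each coordinate at $(a_1,\dots,a_n)$ with a fixed sign pattern $\eta$; the set $F_\eta$ of points of type $\eta$ is $\CM$-definable with non-empty interior; and with $L_i:=C_i$ or its complement according to $\eta(i)$, one gets $x<f(\bar a)\iff \exists \bar y\,(\bar y\in \prod_i L_i\cap F_{\eta} \land x<f(\bar y))$, a formula of the expanded structure. Nothing in your sketch supplies a substitute for this.

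Two further gaps. First, the reduction to unary sets: you propose to handle $\CN_0$-definable subsets of $M^n$ by cell decomposition ``fiberwise'', but fiberwise definability over varying parameters does not yield uniform definability in $\CM'$; the paper instead quotes \cite[Theorem 2]{vdDriesDense}, which says outright that the structure induced on the small model of a dense pair is generated by unary predicates for the realized cuts, so only cuts need to be treated. Second, your opening move replaces the given family of cuts by the family of \emph{all} $\CM'$-definable irrational cuts and appeals to Fact \ref{nval} to see that these are non-valuational; but Fact \ref{nval} presupposes that $\CM'$ is non-valuational, which is not among the hypotheses here (only the adjoined cuts are assumed non-valuational), so this is circular as written --- and the enlargement is in any case unnecessary, since the paper realizes only the given cuts and then proves that the additional induced cuts are already definable from them.
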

\begin{proof}
	Let $\widetilde \CM$ be the expansion of $\CM$ by unary predicates $\{C_i\}_{i\in I}$ interpreted as distinct irrational non-valuational cuts in $M$. We have to show that there exists an elementary extension $\CM\prec \CN$ such that $M$ is dense in $N$ and $\widetilde \CM$ is precisely the structure induced on $\CM$ by all externally definable subsets from $\CN$.
	
	Let $\CN$ be as in Proposition \ref{propcuts}, realizing all $C_i$. Then $\CM\prec \CN$ and $M$ is dense in $N$, so $(\CN,\CM)$ is a dense pair. By \cite[Theorem 2]{vdDriesDense} the structure induced on $\CM$ in the pair $(\CN,\CM)$ is precisely the expansion of $\CM$ by unary predicates for all cuts realized in $\CN$. Thus, by construction, we get that $\widetilde M$ is a reduct of the structure induced on $\CM$ from $(\CN, \CM)$. So it remains to show that any cut over $M$ definable in $(\CN, \CM)$ is definable in $\widetilde \CM$.
	
So let $a\in N$ be any element. We have to show the $(-\infty, a)\cap M$ is definable in $\widetilde \CM$. By construction there are $a_1,\dots, a_n$ realizing the cuts $C_{i_1}, \dots, C_{i_n}$ and an $\CM$-definable continuous function, $f$, such that $f(a_1,\dots, a_n)=a$. Choose $a_1,\dots, a_n$ and $f$ so that $n$ is minimal possible.
For every $\eta\in \{-1,1\}^{n}$ say that $f$ is of type $\eta$ at a point $\bar c\in N^n$ if
\[f_i(x_i):=f(c_1,\dots, c_{i-1},x_i,c_{i+1},\dots, c_n)\]
 is strictly monotone at $c_i$ and for all $1\le i\le n$ and $f_i(x_i)$ is increasing at $c_i$ if and only if $\eta(i)=1$.
By the minimality of $n$ there is some $\eta\in \{-1,1\}^n$ such that $f$ is of type $\eta$ at $(a_1,\dots, a_n)$. In particular, the set $F_{\eta}$ of points $\bar x$ such that $f$ is of type $\eta$ at $x$ is $\CM$-definable with non-empty interior.  Let $L_i:=C_i$ if $\eta(i)=-1$ and $L_i:=D_i$ (the complement of $C_i$) otherwise, then $L:=\prod_{i=1}^n L_i\cap F_{\eta}$ has non-empty interior and definable in $\widetilde M$, and for any $\bar x\in L$ we have that $f(\bar x)<f(a_1,\dots, a_n)=a$. Since $M$ is dense in $N$ and $L(N)$ is not empty also $L(M)$ is not empty. Thus
\[x\in (-\infty, a)\iff \exists y(y\in L \land x<f(y)) \]
and the right hand side is $\widetilde M$-definable.
\end{proof}

\subsection{No external limits}\label{sec-nel}
The key to proving Proposition \ref{thm3} is the fact that o-minimal traces do not have external limits (Propostion \ref{thm1}). We will need the following fact (which follows from \cite[Lemma 1.3]{HaOn2}):

\begin{fact}\label{HaOn}
	Let $\CM\prec \CN$ be o-minimal structures.
	\begin{enumerate}
		\item If $f: N\to N$ is a definable function such that $f(a)\in M$ for all $a\in M$ then there are finitely many $N$-definable intervals $\{I_i\}_{i=1}^k$ and $M$-definable functions $f_i$ such that $\bigcup_{i=1}^k (I_i\cap M)=M$ and $f|I_i=f_i$.
		\item If $M$ is dense in $N$ and $f:N^n\to N$ is continuous with $f(a)\in M$ for all $a\in M^n$ then $f$ is $M$-definable.
	\end{enumerate}
\end{fact}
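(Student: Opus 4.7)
The plan is to derive both parts of Fact~\ref{HaOn} by combining $\CN$-cell decomposition with a descent-of-parameters argument from $N$ to $M$, exploiting $\CM \prec \CN$ together with the hypothesis $f(M) \subseteq M$ (respectively $f(M^n) \subseteq M$).

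For part (1), I would write $f(x) = h(x, \bar c)$ with $h(x, \bar z)$ an $\emptyset$-definable partial function and $\bar c \in N^k$. Applying $\emptyset$-definable cell decomposition in $\CN$ to the family $\{h(\cdot, \bar z)\}_{\bar z}$, viewed as an $\emptyset$-definable function of $(x, \bar z) \in N^{1+k}$, and specializing at $\bar z = \bar c$ yields finitely many $\CN$-definable intervals $J_1, \dots, J_m$ covering $N$ such that on each $J_i$, $f$ is either constant or strictly monotone continuous. The constant case is immediate: if $J_i \cap M \neq \emptyset$, the constant value lies in $f(M) \subseteq M$, so $f|_{J_i}$ is $\CM$-definable. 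For a monotone piece $J_i$ with $J_i \cap M \neq \emptyset$, fix finitely many points $p_1 < \cdots < p_\ell$ in $J_i \cap M$, set $q_j := f(p_j) \in M$, and consider the $\CM$-definable set
\[
D := \{\bar z \in N^k : h(\cdot, \bar z) \text{ strictly monotone continuous on } J_i \text{ with } h(p_j, \bar z) = q_j \text{ for all } j\}.
\]
Since $D$ contains $\bar c$, it is non-empty in $\CN$, and hence by $\CM \prec \CN$ it contains some $\bar c_0 \in M^k$. The $\CM$-definable function $h(\cdot, \bar c_0)$ then agrees with $f$ at each $p_j$, and by o-minimality of $\CM$ applied to the $\CM$-definable agreement locus, together with a sufficiently dense distribution of the $p_j$'s, this finite agreement can be promoted to agreement on an $\CM$-definable sub-interval of $J_i$ containing a prescribed point. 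A finite cover of $J_i \cap M$ by such sub-intervals, together with the corresponding $\bar c_0$'s, yields the desired partition.

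Part (2) follows by combining continuity of $f$, density of $M$ in $N$, and the higher-dimensional analogue of the descent argument on a $\CN$-cell decomposition of $N^n$ adapted to $f$: density and continuity force $f$ to be determined by $f|_{M^n}$, and a coherent descent across cells produces a single $\CM$-definable continuous function agreeing with $f$ on the dense set $M^n$, hence globally.

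The principal obstacle is the finite-to-local promotion in the monotone case of part (1): the natural $\CN$-formula \textit{``$h(\cdot, \bar z) = h(\cdot, \bar c)$''} has $\bar c$ itself as a parameter, so $\CM \prec \CN$ does not apply directly. Bridging this via agreement conditions at many $M$-points and then exploiting the o-minimal structure of the resulting $\CM$-definable agreement set to extract an interval of agreement is the delicate step; in part (2), this descent must additionally be carried out uniformly across the cells of $N^n$ so as to glue into a single globally continuous $\CM$-definable function.
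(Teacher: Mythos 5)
The paper offers no proof of this Fact: it is imported wholesale from \cite[Lemma 1.3]{HaOn2}, so there is no internal argument to compare yours against. Judged on its own terms, your proposal has a genuine gap, located exactly at the step you yourself flag as the ``principal obstacle'': the finite-to-local promotion in part (1) is not merely delicate, it is unavailable by the mechanism you describe. Choosing $\bar c_0\in M^k$ so that $h(\cdot,\bar c_0)$ is monotone continuous on $J_i$ and interpolates $f$ at finitely many points $p_1,\dots,p_\ell\in M$ gives no control over the agreement set $\{x: h(x,\bar c_0)=h(x,\bar c)\}$ beyond its being $\CN$-definable (hence a finite union of points and intervals) and containing the $p_j$; nothing forces it to contain an interval, and it is not $\CM$-definable, since it involves $\bar c$. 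Concretely, take $\CN=(\Rr,+,\cdot,<)$, $\CM=\Ralg$, $h(x,z_0,\dots,z_d)=\sum_i z_ix^i$, and $f$ a polynomial with algebraically independent transcendental coefficients mapping $\Ralg$ into $\Ralg$ on the relevant piece: any $\ell\le d$ interpolation conditions at algebraic points with algebraic values are met by a positive-dimensional family of polynomials over $\Ralg$, a generic member of which meets $f$ only at the nodes. Taking more nodes does not help for a general family $h$: either the interpolation conditions determine $\bar z$ up to a finite set---in which case $\bar c\in\acl^{\CN}(M)=M$ and one should argue that way directly---or they do not, and then the solution set $D$ is infinite and its generic $M$-points define functions disagreeing with $f$ off the nodes. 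The real content of the lemma is a descent of the parameter $\bar c$ to $M$ using the exchange property (in the $\Ralg\prec\Rr$ example, splitting a defining polynomial along a transcendence basis of its coefficients to exhibit $f(a)$ as algebraic over $\Ralg(a)$); your proposal does not supply this.

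Part (2) is likewise only gestured at. The observation that density plus continuity makes $f$ determined by $f|_{M^n}$ is correct and is the right starting point (indeed $\gra(f)=\topcl_{N^{n+1}}\bigl(\gra(f)\cap M^{n+1}\bigr)$), but converting ``determined by its restriction to $M^n$'' into ``$M$-definable'' is precisely the same descent problem: one needs an $M$-definable set whose trace on $M^{n+1}$ is $\gra(f)\cap M^{n+1}$ before taking closures, or a code for $f$ known to lie in $\dcl(M)=M$. The phrase ``a coherent descent across cells'' leans on the part (1) mechanism that is not in place. So both parts need the exchange-based argument of \cite[Lemma 1.3]{HaOn2} (or an equivalent), and neither is established by what you have written.
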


We can now prove a slightly stronger statement than Proposition \ref{thm1}: 

\begin{lemma}\label{NoEL}
	Let $\CM\prec \CN$ be o-minimal structures. Let $\CM_1$ be the structure induced on $\CM$ by all $\CN$-definable sets. Then $\CM_1$ has no external limits.
\end{lemma}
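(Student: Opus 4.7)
The plan is to combine o-minimal cell decomposition in $\CN$ with Fact \ref{HaOn}(1) in order to replace an $\CM_1$-definable function near an endpoint by an $\CM$-definable one, after which o-minimality of $\CM$ kills any potential external limit. So fix an $\CM_1$-definable $f:(a,b)\cap M\to M$ and focus by symmetry on the limit at $a^+$, assuming $a\in M$ (the infinite case is analogous). By definition of $\CM_1$, pick an $\CN$-definable $\tilde\Gamma\subseteq N^2$ with $\tilde\Gamma\cap M^2=\Gamma_f$, and apply cell decomposition in $\CN$: for some $\eta>0$ in $N$ the set $\tilde\Gamma$ over $(a,a+\eta)$ is a disjoint union of graphs $\Gamma_{g_1},\ldots,\Gamma_{g_\ell}$ of continuous $\CN$-definable functions together with the open strips between them.

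The first substantive step is to argue that for every $x\in M\cap(a,a+\eta)$ the point $(x,f(x))$ must lie on one of the graph cells. The fiber $\tilde\Gamma_x$ meets $M$ only in $\{f(x)\}$, whereas any open-strip cell would contribute an open $N$-interval to the fiber; density of $M$ in $N$ (which is automatic in the o-minimal trace setting under which Proposition \ref{thm1} is meant to be applied) would then force infinitely many $M$-points into that fiber, a contradiction. This produces an $\CM_1$-definable partition of $(a,a+\eta)\cap M$ into the finite family $A_j=\{x:f(x)=g_j(x)\}$. By weak o-minimality of $\CM_1$ (together with pigeonhole) some $A_{j_0}$ is cofinal at $a^+$ in $M$, and density of $M$ in $N$ then furnishes a genuine $\delta>0$ in $M$ with $(a,a+\delta)\cap M\subseteq A_{j_0}$, so that $f=g_{j_0}$ on $(a,a+\delta)\cap M$.

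To finish, define $G:N\to N$ by $G(x)=g_{j_0}(x)$ for $x\in(a,a+\delta)$ and $G(x)=0$ otherwise. Then $G$ is $\CN$-definable and satisfies $G(M)\subseteq M$, so Fact \ref{HaOn}(1) supplies $\CN$-definable intervals $\{I_i\}$ with $\bigcup_i(I_i\cap M)=M$ and $\CM$-definable functions $h_i$ such that $G|_{I_i}=h_i|_{I_i}$. Some $I_{i_0}$ is cofinal at $a^+$ in $M$, and on a small enough $M$-neighborhood of $a^+$ we therefore have $f=G=h_{i_0}$. Since $h_{i_0}$ is $\CM$-definable and $\CM$ is o-minimal, $\lim_{t\to a^+}h_{i_0}(t)$ exists in $M\cup\{\pm\infty\}$, whence so does the limit of $f$ at $a^+$; the argument at $b^-$ is symmetric. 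The most delicate point is the exclusion of strip cells, where density of $M$ in $N$ is essential; in the intended application this density is built into the definition of an o-minimal trace, so the argument proceeds cleanly.
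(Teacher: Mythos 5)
Your overall architecture is reasonable, and the second half of your argument (padding $g_{j_0}$ to a total $\CN$-definable function, applying Fact \ref{HaOn}(1), and finishing with o-minimality of $\CM$) is essentially how the paper's own proof ends. The gap is in the first half: your exclusion of the open band cells rests entirely on density of $M$ in $N$, and you explicitly flag that you are importing this hypothesis from ``the intended application''. But Lemma \ref{NoEL} is stated for an arbitrary elementary pair $\CM\prec\CN$ of o-minimal structures, with no density assumption, and the paper genuinely uses it in non-dense situations: Corollary \ref{nEL}(2) applies it to RCVF, where $M$ is \emph{not} dense in $N$ (if $c$ realises a valuational cut over $M$ then, e.g., $(c,2c)\cap M=\emptyset$), and Lemma \ref{nSF} assumes only that one non-valuational cut is realised in $N$, again with no density. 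Without density your key step fails concretely: a two-dimensional band cell in the decomposition of $\tilde\Gamma$ can have a fibre over $x\in M$ that is an open $N$-interval containing exactly one point of $M$, so nothing forces $(x,f(x))$ onto a graph cell. A smaller issue of the same nature: the $\eta>0$ in $N$ produced by cell decomposition could be infinitesimal over $M$, in which case $(a,a+\eta)\cap M=\emptyset$ and your partition into the sets $A_j$ says nothing about $f$ on a genuine $M$-neighbourhood of $a$; density rescues this too, but only density.

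The paper sidesteps all of this by quoting Baisalov--Poizat: every function definable in the structure induced on $M$ by $\CN$-definable sets is the restriction to $M$ of an honest $\CN$-definable function $F:(a,b)\to N$. That extension theorem holds for an arbitrary elementary pair and replaces your entire cell-decomposition analysis; Fact \ref{HaOn}(1) is then applied to $F$ directly. So your proposal is correct as a proof of Proposition \ref{thm1} (o-minimal traces, where density is built into Definition \ref{trace}), but it does not prove Lemma \ref{NoEL} in the stated generality. To repair it, replace the band-cell argument by the Baisalov--Poizat extension, or else restate the lemma with a density hypothesis and re-derive Corollary \ref{nEL}(2) and Lemma \ref{nSF} by other means.
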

\begin{proof}

If $a<b$ are elements in $M$ and $f:(a,b)\to M$ is an $\CN$-definable function then by \cite{BaiPoi} there exists an $\CN$-definable function $F:(a,b)\to N$ such that $F|M=f$. By Fact \ref{HaOn} there is an $\CN$-definable interval $\cl(I)\ni b$ such that $F|I\cap M$ is the graph of an $\CM$-definable function $g$. In particular $\lim_{x\to b}f(x)=\lim_{x\to b}g(x)$. By o-minimality of $\CM$ we know that either $g$ is unbounded near $b$, in which case we have nothing to prove, or $\lim_{x\to b}g(x)\in M$, proving the claim.
\end{proof}

As a special case, we obtain Proposition \ref{thm1}.

\begin{corollary}\label{nEL}
	Any expansion of an o-minimal group by externally definable sets has no external limits. In particular: 
\begin{enumerate}
	\item  Every o-minimal trace has no external limits.
	\item There are weakly o-minimal valuational structures, e.g., RCVF, with no external limits. 
\end{enumerate}
\end{corollary}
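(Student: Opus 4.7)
The plan is to derive the corollary from Lemma \ref{NoEL} by a reduct argument, and then to verify parts (1) and (2) individually. For the main statement, I would start by observing that any expansion $\CM'$ of an o-minimal group $\CM$ by externally definable sets is, by definition, a reduct of the full structure $\CM_1$ induced on $M$ by some elementary extension $\CM\prec\CN$. By \cite{BaiPoi} this $\CM_1$ is weakly o-minimal, hence so is $\CM'$, and Definition \ref{nel} applies. Lemma \ref{NoEL} then yields that $\CM_1$ has no external limits; since every $\CM'$-definable function is \emph{a fortiori} $\CM_1$-definable, its one-sided limits (which exist in the Dedekind completion by strong monotonicity) must already lie in $M\cup\{\pm\infty\}$, so the property descends to $\CM'$.

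Part (1) is then immediate: by Definition \ref{trace}, an o-minimal trace is exactly such an $\CM_1$ (with the additional density condition), so the main statement applies. For part (2), I would exhibit RCVF as an expansion of an o-minimal group by externally definable sets. Given a real closed field $F$ with a non-trivial proper convex valuation ring $V$, the key step is to verify that the $1$-type $p(x)=\{c<x : c\in V_{\geq 0}\}\cup\{x<d : d\in F_{>0}\setminus V\}$ is finitely consistent, which holds because $V_{\geq 0}$ and $F_{>0}\setminus V$ are disjoint convex sets with $V_{\geq 0} < F_{>0}\setminus V$. Taking $F\prec F^*$ realising $p$ by some $a$, one has $V=\{x\in F : -a<x<a\}$, showing that the distinguished predicate of RCVF is externally definable in $F$; the main statement then applies. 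This also illustrates that ``no external limits'' is strictly weaker than being an o-minimal trace, since RCVF is valuational and hence, by Proposition \ref{expansion}, not an o-minimal trace.

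There is no serious obstacle once Lemma \ref{NoEL} is in hand. The only fine point is the observation that ``no external limits'' is a universal property over all definable functions of the structure, and therefore descends to reducts — this is exactly what lets the general main statement follow from the special case of the full induced structure covered by Lemma \ref{NoEL}. The verification in (2) is standard model-theoretic folklore, but worth spelling out precisely because it clarifies how a valuational (and hence non-trace) structure can still enjoy the property, and so the class of weakly o-minimal structures satisfying Definition \ref{nel} is genuinely broader than the class of o-minimal traces.
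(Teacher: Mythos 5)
Your argument is correct and follows the paper's intended route: the paper derives this corollary directly from Lemma \ref{NoEL} with no further comment, and your two supplementary observations --- that ``no external limits'' passes from the full induced structure $\CM_1$ to any reduct because every function definable in the reduct is $\CM_1$-definable, and that the valuation ring of RCVF is externally definable as the trace of an interval $(-a,a)$ for $a$ realising the cut above $V_{\geq 0}$ --- supply exactly the details the paper leaves implicit. Nothing is missing.
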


\begin{remark}
	Corollary \ref{nEL}(2) answers negatively a question of Peterzil (private communication) who asked whether having no external limits is characteristic of non-valuational structures. 
\end{remark}

There is an explicit connection between definable non-valuational cuts, definable Skolem functions and no external limits: 

\begin{lemma}\label{DC1}
	Let $\CM$ be a weakly o-minimal expansion of an ordered group, $C$ an $\CM$-definable irrational non-valuational cut such that $0\in C$. Then the formula 
	\[\phi_C(x):=\exists y(x>0\land y\in C\land x+y\notin C)\]
	has a definable Skolem function only if $\CM$ has external limits. 
\end{lemma}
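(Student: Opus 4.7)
The plan is to assume a definable Skolem function $h$ for $\phi_C$ exists, and then extract an external limit of $h$ at $0^+$.

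Set $\alpha := \sup C$ in the Dedekind completion $\overline M$. Because $(C,D)$ is irrational, $\alpha \notin M$; because $0 \in C$ and $D \neq \emptyset$ (as $C$ has no maximum by convention), $\alpha$ lies in $\overline M \setminus (M \cup \{\pm\infty\})$. Unpacking $\phi_C(x)$: a witness $y \in C$ with $x + y \notin C$ (i.e.\ $x+y\in D$) is precisely an element of $C \cap (\alpha - x, \alpha)$. First, by non-valuationality of $(C,D)$, for any $x > 0$ in $M$ there exist $c \in C$, $d \in D$ with $d - c < x$; then $c > d - x > \alpha - x$, so $c$ is such a witness. Hence $\phi_C(x)$ holds for every $x > 0$, the function $h$ is defined on all positive elements of $M$, and for every $x > 0$,
\[
\alpha - x \;<\; h(x) \;<\; \alpha.
\]

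The heart of the proof is then a squeeze. Restrict $h$ to the definable interval $(0,1)$ and set $\ell := \lim_{t \to 0^+} h(t)$. By the strong monotonicity theorem for weakly o-minimal structures (recalled immediately after Definition~\ref{nel}), this limit exists in $\overline M \cup \{\pm\infty\}$. The bound $h(x) < \alpha$ forces $\ell \leq \alpha$. For the opposite inequality, fix $\beta \in C$ arbitrary and choose $\beta' \in C$ with $\beta' > \beta$ (possible since $C$ has no maximum); then $\alpha > \beta'$, and for every $x$ with $0 < x < \beta' - \beta$,
\[
h(x) \;>\; \alpha - x \;>\; \alpha - (\beta' - \beta) \;>\; \beta,
\]
so $\ell \geq \beta$. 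Since this holds for every $\beta \in C$ we get $\ell \geq \alpha$, and hence $\ell = \alpha$. Because $\alpha \in \overline M \setminus (M \cup \{\pm\infty\})$, the one-sided limit of $h|_{(0,1)}$ at $0^+$ falls outside $M \cup \{\pm\infty\}$; this is exactly an external limit in the sense of Definition~\ref{nel}.

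The only mildly delicate point --- and the only thing to watch throughout --- is that the natural bounds on $h(x)$ are phrased in terms of $\alpha$, which does not lie in $M$, so the usual sandwich argument has to be carried out after approximating $\alpha$ from below by an element $\beta' \in C \subseteq M$. This approximation is precisely what non-valuationality of $(C,D)$ (combined with ``$C$ has no maximum'') supplies; beyond that, the argument uses only the squeeze and the strong monotonicity lemma already cited in the text.
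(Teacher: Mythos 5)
Your proposal is correct and follows exactly the paper's argument: non-valuationality forces $\phi_C(x)$ to hold for all $x>0$, and any Skolem function is squeezed between $\sup C - x$ and $\sup C$, so its limit at $0^+$ is $\sup C$, which lies outside $M\cup\{\pm\infty\}$ since the cut is irrational. You merely spell out the squeeze and the totality of $\phi_C$ that the paper leaves implicit (and note that in the paper's notation $\overline M$ is the set of \emph{definable} cuts rather than the full Dedekind completion, a harmless discrepancy here).
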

\begin{proof}
	Because $C$ is non-valuational, $\CM\models \phi_C(a)$ for all $a>0$. Thus, if $f:(0,\infty)\to M$ is a Skolem function for $\phi_C(x)$ we must have $\lim\limits_{x\to 0}f(x)=\sup(C)$. Because $C$ is irrational, if $f$ is definable it witnesses that $\CM$ has external limits. 
\end{proof}

We have thus proved: 

\begin{proposition}\label{DC}
	Let $\CM$ be a weakly o-minimal non-valuational expansion of an ordered group. Then $\CM$ is o-minimal if and only if it has no external limits and $\CM$ admits definable Skolem functions.
\end{proposition}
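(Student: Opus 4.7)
The plan is to prove the non-trivial direction by contrapositive, showing that a weakly o-minimal non-valuational expansion of an ordered group that is not o-minimal must either fail to have no external limits or fail to admit definable Skolem functions. The easy direction, that o-minimal expansions of ordered groups have no external limits and admit definable Skolem functions, is standard: limits of definable unary functions in o-minimal structures lie in $M\cup\{\pm\infty\}$, and definable choice for o-minimal expansions of groups is classical.

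So assume $\CM$ is weakly o-minimal, non-valuational, but not o-minimal. The first step is to extract a definable irrational cut. Since every definable subset of $M$ is a finite union of convex sets, and non-o-minimality of $\CM$ means that not every definable subset is a finite union of intervals with endpoints in $M\cup\{\pm\infty\}$, at least one definable convex set has an endpoint which is an irrational cut $(C,D)$ of $M$. By Fact \ref{nval}, since $\CM$ is non-valuational, $(C,D)$ must in fact be non-valuational. Using a translation by a suitable element of $M$ (which is available because we are working in an expansion of an ordered group), we may assume $0\in C$.

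The next step is to invoke Lemma \ref{DC1} directly. The formula
\[\phi_C(x):=\exists y\bigl(x>0\land y\in C\land x+y\notin C\bigr)\]
is satisfied by every $a>0$ in $M$: non-valuationality gives $\inf\{y'-y:y\in C,\,y'\in D\}=0$, so for any $a>0$ we find $y\in C$ and $y'\in D$ with $y'-y<a$, whence $y+a>y'$ and $y+a\in D$. By the definable Skolem hypothesis we pick a definable Skolem function $f:(0,\infty)\to M$ for $\phi_C$. As shown in the proof of Lemma \ref{DC1}, $\lim_{x\to 0^+}f(x)=\sup(C)$, and because $(C,D)$ is irrational this supremum lies in $\overline M\setminus M$. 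This exhibits an external limit for the definable function $f$, contradicting the no-external-limits assumption and completing the contrapositive.

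The only substantive point beyond quoting Lemma \ref{DC1} is the extraction of the definable irrational non-valuational cut from the failure of o-minimality, and this is essentially bookkeeping from the definitions of weak o-minimality and of non-valuational structures (via Fact \ref{nval}). I do not anticipate any genuine obstacle; the proposition is really a clean packaging of Lemma \ref{DC1} together with the definition of a non-valuational structure.
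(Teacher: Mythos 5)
Your proof is correct and follows essentially the same route as the paper's: the forward direction is quoted as standard, and the converse extracts a definable irrational non-valuational cut from the failure of o-minimality and feeds it into Lemma \ref{DC1}. You merely make explicit some steps the paper leaves implicit (the translation so that $0\in C$, the appeal to Fact \ref{nval}, and the verification that $\phi_C(a)$ holds for all $a>0$, which is really part of the proof of Lemma \ref{DC1}).
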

\begin{proof}
	The left-to-right direction is well-known. In the other direction, if $\CM$ is not o-minimal it has at least one definable (irrational) non-valuational cut, $C$. Since by assumption $\CM$ has no external limits, by Lemma \ref{DC1} $\CM$ has no definable Skolem functions. 
\end{proof}

In the context of o-minimal traces we can give a more precise statement, Theorem \ref{main} : 

\begin{corollary}\label{DC2}
	If $\CM$ is elementarily equivalent to an ordered group reduct of an o-minimal trace then $\CM$ has definable Skolem functions if and only if it is o-minimal.
\end{corollary}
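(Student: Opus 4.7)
The plan is to deduce Corollary \ref{DC2} from Proposition \ref{DC} by transporting the \emph{no external limits} property across the reduct and the elementary equivalence. The right-to-left direction is classical (o-minimal expansions of groups admit definable Skolem functions), so I focus on the left-to-right direction: assuming $\CM$ is elementarily equivalent to an ordered group reduct $\CM'$ of an o-minimal trace $\CM''$ and that $\CM$ admits definable Skolem functions, I will conclude that $\CM$ is o-minimal.

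First I would verify that having no external limits is preserved under ordered-group reducts: any definable function in the reduct is a fortiori definable in the ambient structure, so a one-sided limit that fails to lie in $M\cup\{\pm\infty\}$ in the reduct would already witness an external limit in the ambient structure. By Corollary \ref{nEL}(1), $\CM''$ has no external limits, and hence neither does $\CM'$. Next I would check that this property is first-order axiomatisable, hence preserved under elementary equivalence: for each formula $\phi(x,y,\bar z)$ one can write down the scheme asserting that whenever $\phi(\cdot,\cdot,\bar c)$ defines a function on an interval $(a,b)\subseteq M$, both one-sided limits at $a$ and at $b$ exist in $M\cup\{\pm\infty\}$ (existence of each such limit is a first-order statement by strong monotonicity). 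Therefore $\CM$ also has no external limits.

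Parallel preservation arguments apply to the other hypotheses of Proposition \ref{DC}. Weak o-minimality passes to ordered reducts (every definable set in the reduct is a definable set in the larger structure, hence a finite union of convex sets) and to elementary equivalents (by the uniform bound on the number of convex components in definable families from Macpherson--Marker--Steinhorn). Non-valuationality transfers similarly: a definable valuational cut in a reduct or elementary equivalent would yield, by Fact \ref{nval} and pullback/transfer of the defining formula, a definable valuational cut in the original o-minimal trace, contradicting non-valuationality of $\CM''$. Thus $\CM$ is either already o-minimal (in which case we are done) or a weakly o-minimal non-valuational expansion of an ordered group with no external limits.

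In the latter case, Proposition \ref{DC} applied to $\CM$ says precisely that, under \emph{no external limits} plus definable Skolem functions, $\CM$ must be o-minimal, which finishes the proof. The main technical obstacle is not a deep calculation but the careful bookkeeping in the second paragraph: one must confirm that \emph{no external limits}, as phrased in Definition \ref{nel}, is genuinely expressible by a first-order scheme and that each of weak o-minimality, non-valuationality, and this property truly descends along the reduct from $\CM''$ to $\CM'$ and then ascends along the elementary equivalence to $\CM$; once these routine transfer properties are in place, Corollary \ref{DC2} is an immediate assembly of Corollary \ref{nEL} and Proposition \ref{DC}.
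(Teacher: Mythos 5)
Your proposal is correct and follows essentially the same route as the paper: Corollary \ref{nEL} gives that the o-minimal trace has no external limits, one observes that this property together with weak o-minimality and non-valuationality is elementary and preserved under ordered reducts, and then Proposition \ref{DC} finishes the argument. The only difference is that you spell out the routine transfer arguments that the paper leaves as one-line assertions.
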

\begin{proof}
	By Corollary \ref{nEL} o-minimal traces have no external limits. Since both having no external limits and being weakly o-minimal and non-valuational are elementary and preserved under reducts $\CM$ has no external limits, the result follows from Proposition \ref{DC}
\end{proof}

For reducts of o-minimal traces we can show an even stronger result: 
\begin{proposition}\label{reducts}
	Let $\CM$ be an ordered group reduct of an o-minimal trace. Then no expansion of $\CM$ by externally definable sets has definable Skolem functions, unless $\CM$ is o-minimal. . 
\end{proposition}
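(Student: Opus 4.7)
My plan is to show that $\CM'$, the expansion of $\CM$ by externally definable sets from some $\widetilde\CM \succeq \CM$, is itself an ordered group reduct of an o-minimal trace; the proposition then follows at once from Corollary \ref{DC2}. Indeed, $\CM'$ expands the non-o-minimal $\CM$, and any non-interval definable subset of $M$ witnessing the failure of o-minimality for $\CM$ remains definable in $\CM'$, so $\CM'$ is itself not o-minimal; Corollary \ref{DC2} then rules out definable Skolem functions.

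Let $\CT = \CM_0|_{\CN_0\text{-induced}}$ be the o-minimal trace of which $\CM$ is an ordered group reduct, arising from a dense pair $(\CN_0, \CM_0)$. The core of the plan is to construct, by a transfinite induction modelled on Proposition \ref{propcuts}, an elementary extension $(\CN_0^*, \CM_0^*) \succeq (\CN_0, \CM_0)$ in the pair language with the following two properties: (i) $\widetilde\CM$ admits an $\CL_\CM$-elementary embedding over $\CM$ into $\CT^* := \CM_0^*|_{\CN_0^*\text{-induced}}$; and (ii) $M_0$ remains dense in $N_0^*$. Condition (ii) makes $(\CN_0^*, \CM_0)$ itself into a dense pair, whence $\CT^{**} := \CM_0|_{\CN_0^*\text{-induced}}$ is an o-minimal trace. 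Under the embedding supplied by (i), every $\widetilde\CM$-externally definable subset of $M_0^n$ is identified with a $\CN_0^*$-definable set intersected with $M_0^n$, hence is $\CT^{**}$-definable; so $\CM'$ is indeed an ordered group reduct of $\CT^{**}$.

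The main obstacle is combining (i) and (ii) in a single extension. Condition (i) alone follows from standard saturation, but ensuring (ii) concurrently requires care: at each stage of the induction one must realize the $\CL_\CM$-type of the next element of $\widetilde\CM$ over the intermediate model by extending it to an $\CL_{\CM_0}$-type corresponding to a non-valuational irrational cut in the current $\CM_0$-reduct, so that the density-preservation argument of Proposition \ref{propcuts} continues to apply. The non-valuational property of $\widetilde\CM$, inherited elementarily from $\CM$, is what permits such a choice of cut-realization at each step, and thus allows the full construction to be carried out.
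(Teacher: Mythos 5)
Your reduction to Corollary \ref{DC2} would work if the construction did, but the construction has a fatal flaw: conditions (i) and (ii) are in general jointly unsatisfiable. To capture all externally definable sets of $\CM$ you must take $\widetilde\CM\succ\CM$ sufficiently saturated, and such a $\widetilde\CM$ realises the type of a positive infinitesimal over $M$ (this type is finitely satisfiable, hence realised in any $|M|^+$-saturated extension). If $\epsilon\in\widetilde M$ realises it, then its image $\epsilon^*$ under the $\CL_\CM$-elementary embedding over $\CM$ supplied by (i) lies in $M_0^*\subseteq N_0^*$ and satisfies $(0,\epsilon^*)\cap M_0=\0$, so $M_0$ is \emph{not} dense in $N_0^*$, contradicting (ii). Your appeal to non-valuationality to rescue the density-preservation argument does not apply here: non-valuationality is a property of \emph{definable} cuts, whereas the types realised in an elementary extension of $\CM$ include non-definable, valuational (indeed infinitesimal) ones, and these cannot be traded for ``non-valuational irrational cut'' realisations without changing the resulting trace structure. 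So the expansion of $\CM$ by all externally definable sets is, in general, \emph{not} an ordered group reduct of an o-minimal trace, and the plan of feeding it back into Corollary \ref{DC2} collapses.

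The paper sidesteps exactly this obstruction by observing that density is not needed for the key lemma. Lemma \ref{NoEL} shows that the structure induced on $M$ from \emph{any} o-minimal elementary pair $\CM\prec\CN$ -- dense or not -- has no external limits, since Fact \ref{HaOn}(1) makes no density assumption. Accordingly, Lemma \ref{external} realises every externally definable expansion of $\CM$ inside a saturated, non-dense pair $(\widetilde\CN,\CM|\CL)$, and Lemma \ref{nSF} (i.e.\ Lemma \ref{NoEL} combined with Lemma \ref{DC1} applied to a definable irrational non-valuational cut, which exists because $\CM$ is not o-minimal) rules out a definable Skolem function for $\phi_C$. If you want to keep your overall architecture, the fix is to drop (ii) entirely and argue via the no-external-limits property of induced structures from arbitrary o-minimal elementary pairs rather than via membership in the class of o-minimal traces.
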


Before proceeding to the proof of the proposition we need some handle over externally definable sets in the context of (reducts of) o-minimal traces. First, we need some terminology: 

\begin{definition}
	An o-minimal structure $\widetilde \CM$ in a signature $\CL$ witnesses that $\CM$ is an o-minimal trace, if $(\widetilde \CM, \CM|\CL)$ is a dense pair of o-minimal structures and $\CM$ is the structure induced on $M$ from $\widetilde \CM$. 
\end{definition}

%
%

Then: 

\begin{lemma}\label{external}
	Let $\CM$ be an o-minimal trace witnessed by $\widetilde{\CM}$ in the signature $\CL$. Then any expansion of $\CM$ by externally definable sets is definable in the (non-dense) pair $(\widetilde\CN, \CM|\CL)$ where $\widetilde{\CN}\succ \widetilde{\CM}$ is saturated. 
\end{lemma}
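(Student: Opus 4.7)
The plan is to take an arbitrary externally definable subset $S\subseteq M^n$ of $\CM$ and produce an $\CL$-formula $\phi(x,y)$ and a tuple $c\in\widetilde N^k$ such that $S=\{a\in M^n:\widetilde\CN\models\phi(a,c)\}$. This set is then defined in the pair $(\widetilde\CN,\CM|\CL)$ by the formula $\phi(x,c)\wedge\bigwedge_{i=1}^n P(x_i)$, where $P$ is the predicate naming $M$.

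The first step is to realise $S$ as the $M$-trace of an $\CL$-formula evaluated in a single $\CL$-elementary extension of $\widetilde\CM$. By definition of external definability, $S=\{a\in M^n:\CM^\ast\models\phi_0(a,b)\}$ for some formula $\phi_0$ in the induced language of $\CM$ and some $b$ in an elementary extension $\CM^\ast\succ\CM$. I would view $(\widetilde\CM,\CM|\CL)$ as a structure in the pair language $\CL\cup\{P\}$, and pass to a sufficiently saturated pair-elementary extension $(\widetilde\CM^\ast,M^\ast)$. By the dense-pairs analysis of \cite{vdDriesDense}, the structure induced by $\widetilde\CM^\ast$ on $M^\ast$ is an elementary extension of $\CM$ (with its own induced structure), and by saturation of the pair a realisation of $\tp_\CM(b/M)$ can be found inside $M^\ast\subseteq\widetilde M^\ast$. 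Replacing $b$ by such a realisation and translating $\phi_0$ via the induced structure gives an $\CL$-formula $\phi(x,y)$ and a parameter $c'\in(\widetilde M^\ast)^k$ for which $S=\{a\in M^n:\widetilde\CM^\ast\models\phi(a,c')\}$.

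The second step is to transfer $c'$ into $\widetilde\CN$ using saturation. Assuming $\widetilde\CN$ has cardinality exceeding $|\widetilde M^\ast|$ (which may be ensured by enlarging the saturation bound), there is an $\CL$-elementary embedding $\sigma\colon\widetilde\CM^\ast\to\widetilde\CN$ fixing $\widetilde\CM$ pointwise. Setting $c:=\sigma(c')$, elementarity of $\sigma$ together with the fact that $M\subseteq\widetilde M$ is fixed by $\sigma$ yields, for every $a\in M^n$, the equivalence $\widetilde\CM^\ast\models\phi(a,c')\Leftrightarrow\widetilde\CN\models\phi(a,c)$. Consequently $S=\{a\in M^n:\widetilde\CN\models\phi(a,c)\}$, completing the argument.

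The main obstacle is the first step, namely bridging between the (a priori opaque) induced language of $\CM$ and the ambient language $\CL$. The key input is that pair-elementary extensions of $(\widetilde\CM,\CM|\CL)$ carry an elementary extension of $\CM$ on the lower sort, so that the external parameter $b$ can be relocated inside some $\CL$-extension of $\widetilde\CM$; once this is secured, the remainder is a routine saturation-and-embedding argument.
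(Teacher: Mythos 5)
Your argument is correct and follows essentially the same route as the paper: both pass to a saturated elementary extension of the pair $(\widetilde\CM,\CM|\CL)$, observe that the induced structure on the lower sort is a saturated elementary extension of $\CM$ in which the external parameter can be relocated, and then use the Baisalov--Poizat/van den Dries description of the induced structure to replace the formula in the induced language by an $\CL$-formula with parameters from the big model. Your final step of embedding the saturated pair back into $\widetilde\CN$ over $\widetilde\CM$ is just bookkeeping that the paper leaves implicit.
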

\begin{proof}
	Let $(\widetilde{\CN_1}, \widetilde\CN_0)\succ (\widetilde{\CM}, \CM|\CL)$ be saturated. So the induced structure on $N_0$ in the pair is a saturated model of $\mathrm{Th}(\CM)$. Let us denote this structure $\CN$. 
	
	By \cite{BaiPoi} every definable set in $\CN$ is of the form $D\cap N_0^k$ for some $\widetilde{\CN_1}$-definable set $D$. So any externally definable set in $\CM$ is of the form $D\cap M^k$ for some $\widetilde{\CN_1}$-definable set $D$. This is what we needed.
\end{proof}

The above lemma generalises automatically to the case where $\CM$ is an ordered group  reduct of an o-minimal trace. So we are reduced to proving the following: 

\begin{lemma}\label{nSF}
	Let $\CM\prec \CN$ be o-minimal expansions of groups. Assume that there exists $c\in N\setminus M$ realising a non-valuational cut $C$ over $M$. Then no expansion of $\CM$ by $\CN$-definable sets has a definable Skolem function for the formula 
		\[\phi_C(x):=\exists y(x>0\land y\in C\land x+y\notin C)\]
\end{lemma}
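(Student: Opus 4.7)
The plan is to derive a contradiction with Lemma \ref{NoEL}: any definable Skolem function for $\phi_C$ would witness an external limit at $0^+$, which is forbidden in a structure with no external limits. As a preliminary, I note that the statement is interesting only when $C$ is irrational; for rational cuts with $m := \min D \in M$, the map $x \mapsto m - x/2$ is already an $\CM$-definable Skolem function for $\phi_C$, so I proceed under the implicit assumption that $C$ is irrational.

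Suppose toward a contradiction that $f \colon M_{>0} \to M$ is a definable Skolem function for $\phi_C$ in some expansion $\CM'$ of $\CM$ by $\CN$-definable sets. Every $\CM'$-definable set is, by construction, built from $\CN$-definable sets restricted to powers of $M$, so $\CM'$ is a reduct of the structure $\CM_1$ induced on $M$ by all $\CN$-definable sets. Consequently $f$ is $\CM_1$-definable, and by Lemma \ref{NoEL} the one-sided limit $\lim_{x \to 0^+} f(x)$ must exist in $M \cup \{\pm\infty\}$.

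On the other hand, the Skolem property forces $f(x) \in C$ and $f(x) + x \in D$ for every $x > 0$, yielding $c' - x < f(x) < d$ for all $c' \in C$ and $d \in D$. Applying strong monotonicity to obtain a right-neighbourhood of $0$ on which $f$ is monotone, sending $x \to 0^+$ gives $\lim_{x \to 0^+} f(x) = \sup C = \inf D$ in the Dedekind completion of $M$. Since $C$ is irrational this value lies outside $M$, and being squeezed between $C$ and $D$ it is certainly not $\pm\infty$, yielding the contradiction. The main obstacle I anticipate is the clean passage from ``$\CM'$-definable'' to ``$\CM_1$-definable'' and the verification that the ``no external limits'' property of $\CM_1$ is inherited by its reducts; both are essentially built into the definitions (the latter because reducts have fewer definable functions and hence fewer limits to control), but should be spelled out carefully to avoid ambiguity.
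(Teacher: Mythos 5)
Your proof is correct and follows the paper's own route exactly: the paper's proof of Lemma \ref{nSF} is literally ``by Lemma \ref{NoEL} and Lemma \ref{DC1}'', and your squeeze argument showing $\lim_{x\to 0^+} f(x)=\sup C\notin M\cup\{\pm\infty\}$ is precisely the content of Lemma \ref{DC1}. Your preliminary remark that $C$ must be taken irrational is a sound clarification rather than a deviation --- Lemma \ref{DC1} carries that hypothesis explicitly, and it is the only case that arises in the application to Proposition \ref{reducts}.
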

\begin{proof}
By Lemma \ref{NoEL} and Proposition \ref{DC1}.
\end{proof}

Combining all the above observations we get a proof of Proposition \ref{reducts}: 
\begin{proof}[Proof of Proposition \ref{reducts}]
	Assume $\CM$ is not o-minimal. Being non-valuational it admits at least one definable non-valuational irrational cut, $C$. By Lemma \ref{external} any externally definable expansion of $\CM$ is a reduct of the structure induced on $M$ in some o-minimal pair $(\CN_0, \CM_0)$. Since the structure induced on $M$ in the pair is an expansion of $\CM$ it admits at least one definable irrational non-valuational cut. So by Lemma \ref{nSF} this expansion has no definable Skolem functions. 
\end{proof}


\begin{remark}
	There are good reasons to believe that Proposition \ref{reducts} extends to structures elementarily equivalent to such reducts. It seems, however, that to prove such a result more sophisticated techniques are required, going beyond the scope of the present note. 
\end{remark}

We point out that the assumption in Proposition \ref{DC} of $\CM$ expanding an o-minimal ordered group is necessary.

\begin{example}
	Let $\CR$ be the structure obtained by appending two real closed fields one ``on top'' of the other. More precisely, the language is given by $(\le,R_1, R_2, +_1, \cdot_1, +2, \cdot_2)$ and the theory of $\CR$ is axiomatised by:
	\begin{enumerate}
	\item $R_1, R_2$ are unary predicates such that $(\forall x)(R_1(x)\leftrightarrow \neg R_2(x))$ and $(\forall x,y)(R_1(x)\land R_2(y)\to x<y)$.
	\item $+_i, \cdot_i$ are ternary relations supported only on triples of elements in $R_i$. They are graphs of functions on their domains, and $R_i$ is a real closed field with respect to these operations.
	\item $\le$ is an order relation compatible with the field ordering of $R_1$ and $R_2$ together with (1) above.
\end{enumerate}
It follows immediately from quantifier elimination for real closed fields, that the above theory is complete and has quantifier elimination (after adding constants for $0,1$ in both fields, and relation symbols for the inverse function in both fields). Thus $\CR$ is weakly o-minimal, and the only definable cut in $\CR$ not realized in $\CR$ is $(R_1(R),R_2(R))$. However, $\CR$ does have external limits. Take the function $x\mapsto x^{-1}$ in the field structure on $R_1$ on the interval $(0,1)$. Clearly its limit, as $x\to 0^+$ is $*$.

As a note, the boolean algebra of definable subsets of $R^n$ (any $n$) is the boolean algebra generated by sets of the form $S_1\times S_2$ where $S_i\subseteq R_i^{n_i}$ are semi-algebraic sets with $n_1+n_2=n$ and closing under the natural action of $\mathrm{Sym}(n)$. It follows that $\CR$ has definable Skolem functions.
\end{example}


\section{A new example}\label{sec-example}

Our initial approach to proving Conjecture \ref{conj} was to verify whether all weakly o-minimal non-valuational structures are o-minimal traces. As it turns out, the class of o-minimal traces is not closed under taking ordered group reducts. We do not prove this here. The present section 
is dedicated to an example, $\Qq_{vs}^\pi$, of a weakly o-minimal expansion of the ordered group of rational numbers which is not a reduct of an o-minimal trace. However,  $\Qq_{vs}^\pi$ is elementarily equivalent to a reduct, $\Ralg^\pi$, of an o-minimal trace. This shows that the class of reducts of o-minimal traces is not elementary. We do not know the answer to the analogous question for o-minimal traces. \\

We first construct $\Ralg^\pi$. Some preliminary work is needed.

\begin{lemma}
	Let $\CN$ be an o-minimal structure, $M\subseteq N$ a dense subset. Assume that $\CN'$ is an o-minimal structure with universe $N$ and the same order relation as $\CN$, and such that for any $\CN$-definable set $S\subseteq N^n$ there exists an $\CN'$-definable set $S'$ such that $S\cap M^n=S'\cap M^n$. Then any $\CN$-definable set is $\CN'$-definable. 
\end{lemma}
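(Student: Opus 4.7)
My plan is to induct on the ambient dimension $n$ and reduce everything to showing that continuous $\CN$-definable functions have $\CN'$-definable graphs.

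For the base case $n = 1$, o-minimality of both $\CN$ and $\CN'$ decomposes the given $\CN$-definable $S \subseteq N$ and the matching $\CN'$-definable $S'$ (with $S \cap M = S' \cap M$) as $S = U \cup F$ and $S' = U' \cup F'$, where $U, U'$ are finite unions of maximal open intervals and $F, F'$ are finite sets of points. Density of $M$ in $N$ recovers the endpoints of each open-interval component as the infimum and supremum of its $M$-trace, so $U = U'$. Hence $S \triangle S'$ is a finite subset of $N$; since every singleton $\{a\}$ with $a \in N$ is $\CN'$-definable with $a$ as parameter, $S$ itself is $\CN'$-definable.

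For the inductive step, apply $\CN$-cell decomposition to $S \subseteq N^n$ and reduce the problem to showing that every continuous $\CN$-definable $f \colon D \to N$, with $D \subseteq N^{n-1}$ an $\CN$-cell (hence $\CN'$-definable by the inductive hypothesis), has $\CN'$-definable graph. Rather than applying the hypothesis directly to $\Gamma_f$ --- whose $M^n$-trace may be sparse, or even empty if $f$ sends no $M$-points into $M$ --- I would apply it to the full-dimensional open set $U := \{(x,y) \in D \times N : y < f(x)\}$, obtaining an $\CN'$-definable $H$ with $H \cap M^n = U \cap M^n$. Once $U$ is shown to be $\CN'$-definable, the identity $f(x) = \sup\{y : (x,y) \in U\}$ yields $f$ as $\CN'$-definable by o-minimality of $\CN'$, closing the induction.

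The remaining task is to reconstruct $U$ from $H$ by $\CN'$-definable operations, and the claim is that $U = \inte(\overline{H})$, where both topological operations are $\CN'$-definable thanks to o-minimality of $\CN'$. The density of $M$ in $N$ guarantees that any $n$-dimensional cell of $H$ appearing in a $\CN'$-cell decomposition must coincide with a corresponding cell of $U$, since their shared dense $M^n$-trace pins them down, while lower-dimensional components of $H$ (which may very well miss $M^n$ entirely) contribute nothing to the interior of the closure. The main obstacle is the careful verification of this topological cleanup: one must check, using the regular openness of $U$ (which follows from continuity of $f$ over the open cell $D$) together with $\CN'$-cell decomposition, that lower-dimensional components of $H$ outside $U$ cannot produce spurious points in $\inte(\overline{H})$, and that each $n$-cell of $H$ aligns with $U$ exactly rather than offering a shifted or extended version.
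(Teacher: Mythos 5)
Your argument is correct, and its engine is identical to the paper's: for a regular open set $U$ whose trace on $M^n$ agrees with that of an $\CN'$-definable $H$, density of $M$ in $N$ forces $U=\mathrm{int}\,\mathrm{cl}(H)$ --- one inclusion from $\mathrm{cl}(U)=\mathrm{cl}(U\cap M^n)\subseteq \mathrm{cl}(H)$ plus regular openness of $U$, the other because an $M^n$-point of the open set $\mathrm{int}\,\mathrm{cl}(H)\setminus \mathrm{cl}(U)$ (which must exist, since $\mathrm{cl}(H)\setminus H$ has dimension $<n$) would lie in $H\cap M^n$ but not in $U\cap M^n$. Where you genuinely differ is in the reduction to this regular-open case: the paper simply declares that by cell decomposition it suffices to treat open cells, which as stated does not account for lower-dimensional cells (a graph, or a single point, is not a union of open cells and may have empty trace on $M^n$); your induction on $n$, recovering a graph via $f(x)=\sup\{y:(x,y)\in U\}$ from the subgraph region, supplies exactly that missing step, so your write-up is in this respect more complete than the paper's. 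Two small points to tidy: for a cell of dimension $d<n-1$ the base $D$ of your graph presentation is not open in $N^{n-1}$, so you should first project the cell homeomorphically onto an open cell in $N^d$ by a coordinate projection and recover the remaining coordinates one at a time (or run the induction on cells rather than on ambient dimension); and the regular openness of $U$ should be verified explicitly from the regular openness of the open cell $D$ together with continuity of $f$, both of which are routine.
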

\begin{proof}
	By cell decomposition, in $\CN$ it will suffice to show that any $\CN$-definable open cell is also $\CN'$-definable. So let $C$ be such an open cell. Then $C=\mathrm{int}\cl(C)$. In addition, because $C$ is open and $M$ is dense in $N$ we also get $\cl(C\cap M^n)=\cl(C)$.
	
	Let $C'$ be $\CN'$-definable such that $C'\cap M^n=C\cap M^n$. Since $\cl(C)=\cl(C\cap M^n)$ we get that $\cl(C')\supseteq \cl(C)$, implying that $C=\mathrm{int}\cl(C)\subseteq \mathrm{int}\cl(C')$.
	
	On the other hand, if $\0\neq \mathrm{int}\cl(C')\setminus \cl(C)$ then $M^n\cap (\mathrm{int}\cl(C')\setminus \cl(C))\neq \0$. By o-minimality of $\CN'$ this implies that there exists a definable open box $B\subseteq C'$ such that $B\cap \cl(C)=\0$. This contradicts the assumption that $C\cap M^n=C'\cap M^n$.
		
	It follows that $C=\mathrm{int}\cl(C')$. Since the right hand side is $\CN'$-definable and $C$ was an arbitrary open cell the lemma is proved.
\end{proof}

\begin{corollary}\label{unique}
	Let $\CN$, $\CN'$ be o-minimal structures with universe $N$ and the same underlying order. Assume that for some dense $M\subseteq N$, the trace of $\CN$ on $M$ and the trace of $\CN'$ on $M$ are the same. Then $\CN$ and $\CN'$ have the same definable sets.
\end{corollary}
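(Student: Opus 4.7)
The corollary is essentially a direct symmetric application of the preceding lemma, so the plan is short. The hypothesis that the traces of $\CN$ and $\CN'$ on $M$ agree unpacks as follows: the collections $\{S\cap M^n : S\subseteq N^n \text{ is } \CN\text{-definable}\}$ and $\{S'\cap M^n : S'\subseteq N^n \text{ is } \CN'\text{-definable}\}$ coincide (for every $n$). In particular, for every $\CN$-definable set $S\subseteq N^n$ there exists an $\CN'$-definable set $S'$ with $S\cap M^n = S'\cap M^n$, and vice versa.

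First I would check that the hypotheses of the previous lemma are satisfied in one direction: take $\CN$ and $\CN'$ as in the statement; by the unpacking above, the condition ``for any $\CN$-definable $S\subseteq N^n$ there exists an $\CN'$-definable $S'$ with $S\cap M^n = S'\cap M^n$'' holds. Since both $\CN$ and $\CN'$ are o-minimal with the same order and $M$ is dense in $N$, the previous lemma applies and yields that every $\CN$-definable set is $\CN'$-definable.

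Then I would run exactly the same argument with the roles of $\CN$ and $\CN'$ exchanged: the other direction of the trace equality gives that for every $\CN'$-definable $S'$ there is an $\CN$-definable $S$ with $S'\cap M^n = S\cap M^n$, and the lemma (applied to the pair $(\CN',\CN)$) yields that every $\CN'$-definable set is $\CN$-definable. Combining the two inclusions, $\CN$ and $\CN'$ have precisely the same definable sets.

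There is no real obstacle here: all the nontrivial work (the density argument comparing open cells via $C = \mathrm{int}\,\cl(C)$ and $\cl(C) = \cl(C\cap M^n)$) has already been absorbed into the preceding lemma, so the corollary reduces to noting that the trace equality is symmetric and invoking the lemma twice.
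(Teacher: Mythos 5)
Your proposal is correct and is exactly the argument the paper intends: the corollary is left without proof precisely because it follows by applying the preceding lemma twice, once in each direction, using the symmetry of the trace-equality hypothesis. Nothing further is needed.
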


We now recall the following fact from \cite{Wenc1}. Given  a weakly o-minimal non-valuational expansion $\CM$ of an ordered group, let $\overline M$ denote the set of all definable cuts. A function  $f: M^n\to \overline M$ is called \emph{definable} if $\{(x,y): y\ge f(x) \}$ is definable in $\CM$, and it is called \emph{strongly continuous} if it extends continuously to (a necessarily unique)  $\overline f:\overline M^n\to \overline M$.

\begin{fact}\label{fact-Wenc}
There exists an o-minimal structure $\overline \CM$ on $\overline M$ inducing on $M$ precisely the structure $\CM$. Moreover, if $f$ is strongly continuous, its continuous extension $\overline  f$ is definable in $\overline \CM$.
\end{fact}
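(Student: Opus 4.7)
The plan is to build $\overline \CM$ by extending each $\CM$-definable set $X\subseteq M^n$ to a canonical subset $\overline X\subseteq \overline M^n$, using the strong cell decomposition available in the weakly o-minimal non-valuational setting. Recall that in this setting every definable set decomposes into finitely many \emph{strong} cells whose defining boundary functions are strongly continuous, hence extend uniquely and continuously to maps $\overline M^k\to \overline M$. For a strong cell $C=\{(x,y):x\in D,\ f(x)<y<g(x)\}$ (and the other cell types analogously), set $\overline C$ to be the subset of $\overline M^n$ defined by the same formula with $f,g,D$ replaced by $\overline f,\overline g,\overline D$ obtained recursively; then put $\overline X:=\bigcup_i \overline{C_i}$ for any strong cell decomposition $\{C_i\}$ of $X$.

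The first substantive task is well-definedness: different strong decompositions must yield the same $\overline X$. Since $M^n$ is dense in $\overline M^n$, any two candidate extensions have the same trace $X$ on $M^n$, and a topological argument analogous to the one behind Corollary \ref{unique} forces them to coincide: any open box in their symmetric difference would meet $M^n$, contradicting $\overline X\cap M^n=X$. Declaring the definable sets of $\overline \CM$ to be exactly the sets $\overline X$, I would then verify the structure axioms. Closure under Boolean operations is essentially immediate. Closure under projection reduces to the fact that the projection of a strong cell is a strong cell, and that extension commutes with projection on strong domains — again by continuity and density. O-minimality reduces to the case $n=1$, where definable subsets of $M$ are finite unions of convex sets whose endpoints are elements of $\overline M\cup\{\pm\infty\}$, and the extension of such a convex set is the corresponding interval in $\overline M$.

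That $\overline \CM$ induces exactly $\CM$ on $M$ is immediate from $\overline X\cap M^n=X$. For the ``moreover'' clause, if $f:M^n\to \overline M$ is definable and strongly continuous, then $\{(x,y):y\geq f(x)\}$ (interpreted via the embedding $M\hookrightarrow\overline M$) is $\CM$-definable by assumption; applying the construction above and using the uniqueness of the continuous extension, one obtains $\{(x,y)\in\overline M^{n+1}: y\geq \overline f(x)\}$ as its extension, so $\overline f$ is $\overline \CM$-definable.

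The main obstacles I anticipate are well-definedness and closure under projection. Both depend crucially on the fine structure of strong cell decompositions — specifically, the ability to refine any two strong decompositions to a common one and the compatibility of the extension $C\mapsto \overline C$ with refinements, intersections, and projections of strong cells. Once these combinatorial/geometric properties of strong cells are in hand, the verification becomes a routine density-plus-continuity argument; the heart of the matter is therefore the strong cell decomposition theorem itself, used as a black box.
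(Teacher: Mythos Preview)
The paper gives no proof of this statement: it is explicitly quoted as a fact from Wencel \cite{Wenc1} (note the sentence ``We now recall the following fact from \cite{Wenc1}'' preceding it). Your outline is a faithful reconstruction of Wencel's construction of the canonical o-minimal extension, and the broad strokes --- extend strong cells via the continuous extensions of their boundary functions, take unions, verify the structure axioms --- are correct.

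One point worth flagging: for well-definedness you invoke ``a topological argument analogous to the one behind Corollary \ref{unique}''. But the proof of that corollary (via the lemma preceding it) uses o-minimal cell decomposition in the ambient structure to conclude that an open cell equals the interior of its closure; at the stage where you need well-definedness you do not yet know $\overline{\CM}$ is o-minimal, so this is mildly circular. The route you yourself identify at the end --- common refinement of strong cell decompositions and compatibility of $C\mapsto\overline C$ with refinement --- is the right one, and is how Wencel actually proceeds. Once that is in hand, the rest of your sketch (closure under Boolean operations and projection, o-minimality in dimension one, the ``moreover'' clause via uniqueness of continuous extensions) goes through as you describe.
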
 
Wencel calls $\overline \CM$ the ``canonical o-minimal extension'' of $\CM$. Corollary \ref{unique} and Fact \ref{fact-Wenc} give some justification to this terminology. 

\begin{corollary}\label{unique1}
	Let $\CM$ be a weakly o-minimal non-valuational expansion of an ordered group. Then $\overline \CM$ is the unique -- up to a change of signature -- o-minimal structure on $\overline M$ whose induced structure on $M$ is $\CM$.
\end{corollary}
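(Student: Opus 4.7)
The plan is to reduce the claim directly to Corollary \ref{unique}. By Fact \ref{fact-Wenc}, $\overline\CM$ is itself an o-minimal structure on $\overline M$ whose induced structure on $M$ is $\CM$, exhibiting one such witness. Suppose now that $\CN$ is any o-minimal structure on $\overline M$, equipped with its canonical cut order, whose trace on $M$ is also $\CM$. Then $\CN$ and $\overline\CM$ are two o-minimal structures on the common universe $\overline M$, sharing the same underlying order, and inducing the same structure on the subset $M$. Once I verify that $M$ is dense in $\overline M$, Corollary \ref{unique} applies directly and forces $\CN$ and $\overline\CM$ to have the same definable sets, which is precisely the ``uniqueness up to change of signature'' assertion.

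The only remaining step is the density of $M$ in $\overline M$ with respect to the cut order, a routine check that exploits the standing convention that no $C_i$ has a maximum. Given $(C_1,D_1)<(C_2,D_2)$, the set $C_2\setminus C_1$ is nonempty, and any $a$ in it gives a cut lying (non-strictly) between the two; in the only degenerate case, namely when $(C_1,D_1)$ happens to be the rational cut of $a$ itself, one simply replaces $a$ by a larger element of $C_2$, which exists because $C_2$ has no maximum, producing a point of $M$ strictly between $(C_1,D_1)$ and $(C_2,D_2)$.

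I do not anticipate any substantial obstacle. The one subtlety worth flagging is that an arbitrary o-minimal structure on the set $\overline M$ inducing $\CM$ on the dense subset $M$ automatically carries the canonical cut order on $\overline M$, since that order is already determined by the order on $M$, which is itself part of the data of $\CM$. Once this is granted, the proof amounts to no more than assembling Fact \ref{fact-Wenc}, the density observation above, and Corollary \ref{unique}.
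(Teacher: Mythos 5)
Your proof is correct and follows exactly the route the paper intends: the corollary is stated there without an explicit proof, as an immediate consequence of Fact \ref{fact-Wenc} (existence) combined with Corollary \ref{unique} (uniqueness via agreement of traces on the dense subset $M$). Your explicit verification that $M$ is dense in $\overline M$, including the degenerate rational-cut case, is a detail the paper leaves implicit but is handled correctly.
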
 

We now proceed to define the structure $\Ralg^\pi$. Let $\Rr$ be the field of real numbers, $\Ralg$ the real closure of $\Qq$. Then $(\Rr, \Ralg)$ is a dense pair. Let $\widetilde{\Ralg}$ be the structure on $\Ralg$ induced from $\Rr$, and $\Ralg^{\pi}$ the reduct ${(\Ralg, \le,0,1, +, \pi\cdot)}$, that is, the additive group of the field of real algebraic numbers equipped with  the unary function $x\mapsto \pi x$. The structure $\Ralg^\pi$ is a weakly o-minimal non-valuational expansion of an ordered group with no external limits, being the reduct of $\widetilde \Ralg$, which has these properties by virtue of being an o-minimal trace. Our first goal is to prove a quantifier elimination result for the theory of $\Ralg^\pi$ (Proposition \ref{QE} below).

\begin{lemma}\label{ralg}
	For all $\alpha\in \Qq(\pi)$ the relation $\alpha x<y$ is $\0$-definable in $\Ralg^\pi$. Moreover, for $\alpha_1,\dots, \alpha_n\in \Qq(\pi)$ linearly independent over $\Qq$ \\
	the relation $\sum_{i=1}^n \alpha_i x_i <0$ is $\0$-definable.
\end{lemma}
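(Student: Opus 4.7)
The plan is to interpret the primitive $\pi\cdot$ as providing the $\emptyset$-definable binary relation $U(x,y)\iff y<\pi x$ (with $\pi x$ understood via the strongly continuous cut-valued function $\Ralg\to\overline{\Ralg}$ coming from Fact \ref{fact-Wenc}). The first assertion is the case $n=2$, $\alpha_1=\alpha$, $\alpha_2=-1$ of the second when $\alpha\notin\Qq$, and trivial from the $\Qq$-vector space structure when $\alpha\in\Qq$; so I focus on the ``moreover'' clause. Given $\alpha_1,\ldots,\alpha_n\in\Qq(\pi)$, I first clear a common denominator: choose $q(\pi)\in\Zz[\pi]$ with $q(\pi)>0$ (possible since $q(\pi)$ is a specific element of $\Rr$ of definite sign) and write $\alpha_i=p_i(\pi)/q(\pi)$ with $p_i\in\Zz[\pi]$. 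Then $\sum_i\alpha_ix_i<0\iff\sum_i p_i(\pi)x_i<0$, so it suffices to prove that for every $n$ and all $p_1,\ldots,p_n\in\Zz[\pi]$, the relation $\sum_i p_i(\pi)x_i<0$ on $\Ralg^n$ is $\emptyset$-definable in $\Ralg^\pi$.

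I would prove this by induction on $k:=\max_i\deg p_i$, letting $n$ vary. When $k=0$ the $p_i$ are integers, so $\sum_i p_ix_i$ is a term in $+,-,0$ and the inequality is definable from $+,<$ alone. For the inductive step, write $p_i(\pi)=a_i+\pi r_i(\pi)$ with $a_i\in\Zz$ and $\deg r_i\le k-1$, and set $A:=\sum_i a_ix_i\in\Ralg$ and $V:=\sum_i r_i(\pi)x_i\in\Rr$; the target becomes $\pi V<-A$. The key density observation is
\[
\pi V<-A \;\iff\; \exists w\in\Ralg\,\bigl(V<w \;\land\; \pi w\le -A\bigr),
\]
where the forward direction uses the density of $\Ralg$ in $\Rr$ to pick $w\in\Ralg$ with $V<w<-A/\pi$, and the backward direction is immediate from $\pi V<\pi w\le -A$. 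The conjunct $V<w$ rewrites as a $\Zz[\pi]$-linear combination of $x_1,\ldots,x_n,w$ (with coefficients $r_1(\pi),\ldots,r_n(\pi),-1$) of maximum degree $\le k-1$, hence $\emptyset$-definable by the inductive hypothesis applied in $n+1$ variables. The conjunct $\pi w\le -A$ is $\neg U(w,-A)$, and $-A$ is a $\Zz$-linear term in the $x_i$'s. Both are therefore $\emptyset$-definable, completing the induction.

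The main obstacle is justifying the displayed density equivalence: it is precisely the density of $\Ralg$ in $\Rr$ that lets one ``trap'' the external element $V\in\Rr\setminus\Ralg$ between elements of $\Ralg$, and this feature is exactly what distinguishes $\Ralg^\pi$ from a pure additive-group reduct of $\Ralg$. Beyond this, the induction is routine bookkeeping, and the sign choice for $q(\pi)$ is a fixed datum that enters only in the choice of formula, not as an extra quantifier.
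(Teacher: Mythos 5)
Your proof is correct, but it takes a genuinely different route from the paper's. The paper argues in two stages: it first shows that the set of $\alpha$ for which $\alpha x<y$ is definable is closed under addition and contains $\pi^{n}$ for all $n\in\Zz$ (via existential approximation formulas of the same flavour as yours), and then defines $\sum_i\alpha_i x_i<0$ by the \emph{universal} formula $\forall \bar x'\bigl(\bigwedge_i x_i'<\alpha_i x_i\to\sum_i x_i'<0\bigr)$; the delicate point there is strictness, i.e.\ ruling out $\sum_i\alpha_i x_i=0$, and that is exactly where the hypothesis that the $\alpha_i$ are $\Qq$-linearly independent is used, together with the transcendence of $\pi$ over $\Ralg$. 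Your single induction on the degree of the integral polynomial coefficients sidesteps this entirely: the asymmetric pair of conjuncts $V<w$ and $\pi w\le -A$ makes your displayed equivalence exact even in the degenerate case $\pi V=-A$, so you never need to know that the linear form is nonvanishing, and as a result you prove a strictly more general statement (arbitrary coefficients in $\Qq(\pi)$, no independence hypothesis), from which the stated lemma follows. What the paper's version buys is that the roles of linear independence and of the transcendence of $\pi$ are made explicit, which is what feeds into the subsequent axiomatization $T_0$ (where the predicates $C_{\bar\alpha}$ are indexed by elements of $\Rr\otimes_\Qq\Qq(\pi)$); what yours buys is a more uniform induction and a cleaner treatment of strictness. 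Both arguments rest on the same engine --- density of $\Ralg$ in $\Rr$ --- and both read the primitive $\pi\cdot$ as the cut relation $y<\pi x$, so I see no gap.
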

\begin{proof}
	Abusing terminology, we will say that $\alpha\in \Qq(\pi)$ is definable in $\Ralg^\pi$ if the relation $\alpha x <y$ is.  We  show that if $\alpha, \beta\in \Qq(\pi)$ are definable in $\Ralg^\pi$ then so is $\alpha+\beta$ and that $\pi^{n}$ is definable for all $n\in \Zz$. 	Indeed, if $\alpha, \beta$ are definable by $S_\alpha, S_\beta$ then $\alpha+\beta$ is defined by
	\[(\exists z_1,z_2)(S_\alpha(x,z_1)\land S_\beta(x,z_2)\land y>z_1+z_2).\]
	If $x\mapsto \pi^n x$ is definable by $P_n(x,y)$ then $x\mapsto \pi^{n+1}x$ is defined by
	\[
	(\exists z)(P_n(x,z)\land P_1(z,y).
	\]
	So to conclude the first part of the lemma it remains only to note that $P_{-1}(x,y)$ is given by $P_1(y,x)$.
	
	For the second part of the lemma we will show that if $n>1$  and  $\alpha_1, \dots, \alpha_n$ are linearly independent over $\Qq$ then:
	\[\sum_{i=1}^n \alpha_i x_i < 0 \iff (\forall x_1',\dots x_n')\left (\bigwedge_{i=1}^n x_i'<\alpha_i x \to \sum_{i=1^n} x_i'< 0 \right )
	\]	
	unless $x_i=0$ for all $i$.
	The left-to-right direction is clear, so we have to show the other implication. The assumption implies $\sum_{i=1}^n \alpha_i x_i  \le 0$, so we only have to check that equality cannot hold. First, observe that since the $x_i$ are not all $0$, we may assume -- by induction on $n$ -- that the $x_i$ are linearly independent over $\Qq$. Indeed, if $x_1=\sum_{i=2}^n q_i x_i$ for $q_i\in \Qq$ we get
	\[
	\sum_{i=1}^n \alpha_i x_i=\alpha_1\sum_{i=2}^n q_i x_i +\sum_{i=2}^n \alpha_i x_i=\sum_{n=2}^n (q_i\alpha_1+\alpha_i)x_i
	\]
	and as $\{(q_i\alpha_1+\alpha_i)\}_{i=2}^n$ are still independent over $\Qq$ the claim follows. 
	Now $\alpha  _i$ are polynomials in $\pi$ with rational co-efficients and $x_i$ are real algebraic numbers, so we can write $\sum_{i=1}^n \alpha_i x_i=\sum_{i=0}^k \beta_i \pi^i=0$ where  $\beta_i$ are $\Qq$-linear combinations of $x_1,\dots, x_n$. So $\beta_i=0$ for all $i$ if and only if the $\alpha_i$ are all $0$, which is impossible, since they are linearly independent.
\end{proof}

The above lemma can be restated as follows: 
\begin{corollary}
	Let $\CL^\pi$ be the language of ordered $\Qq$-vector spaces expanded by $n$-ary predicates for the relations $C_{\bar \alpha}(\bar x):=\sum_{i=1}^n \alpha_ix_i<0$ for all $n\in \Nn$ and $\alpha_i\in \Qq(\pi)$. Then $C_{\bar \alpha}(\bar x)$ is definable in $\Ralg^\pi$  and $C_{\bar \alpha}(\bar r)\equiv C_{\bar \beta}(\bar s)$ if and only if $\sum \alpha_i\otimes r_i=\sum \beta_j \otimes s_j$ as elements of $\Rr\otimes_\Qq \Qq(\pi)$. 
\end{corollary}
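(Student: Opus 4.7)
The first clause follows directly from Lemma \ref{ralg}: that lemma handles the case of $\Qq$-linearly independent $\bar\alpha$, and the general case reduces to it. Concretely, I would pick a $\Qq$-basis $\gamma_1,\dots,\gamma_k$ of $\mathrm{span}_\Qq(\alpha_1,\dots,\alpha_n)\subseteq \Qq(\pi)$, write $\alpha_i=\sum_j q_{ij}\gamma_j$ with $q_{ij}\in\Qq$, and rearrange
\[
\sum_{i=1}^n\alpha_i x_i \;=\; \sum_{j=1}^k \gamma_j\Bigl(\sum_{i=1}^n q_{ij}x_i\Bigr).
\]
The inner sums are $\Qq$-linear in the $x_i$ and hence $\0$-definable in the ordered $\Qq$-vector space reduct. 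Substituting them into $C_{\bar\gamma}$---which is $\0$-definable by Lemma \ref{ralg}---yields a $\0$-definition of $C_{\bar\alpha}$.

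For the equivalence, I would read $C_{\bar\alpha}(\bar r)\equiv C_{\bar\beta}(\bar s)$ as the assertion that the two $\Qq(\pi)$-linear forms $\sum\alpha_i r_i$ and $\sum\beta_j s_j$ coincide, after aligning $\bar r$ and $\bar s$ on a common index set (padded with zero coefficients where necessary). The universal property of the tensor product then sets up a bijection between such $\Qq(\pi)$-linear combinations over $\Rr$ and elements of $\Rr\otimes_\Qq\Qq(\pi)$: two formal combinations determine the same linear form precisely when their associated tensor representatives agree. The ``if'' direction is a direct application of this universal property, while the ``only if'' direction uses flatness of $\Qq(\pi)$ over $\Qq$, so that distinct tensor elements correspond to distinct $\Qq$-linear functionals on the relevant finite-dimensional $\Qq$-subspaces of $\Rr$.

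The main bookkeeping step is the alignment of the tuples $\bar r$ and $\bar s$ into a single tensor framework so that the equality $\sum\alpha_i\otimes r_i=\sum\beta_j\otimes s_j$ is a meaningful statement in $\Rr\otimes_\Qq\Qq(\pi)$; once this is arranged, the mathematical substance rests entirely on Lemma \ref{ralg} and the universal property of the tensor product. The one subtle ingredient is the $\Qq$-linear disjointness of $\Qq(\pi)$ and $\Ralg$ inside $\Rr$ (a consequence of the transcendence of $\pi$ over $\Qq$), which guarantees that the embedding of the finite-dimensional subspaces into $\Rr\otimes_\Qq\Qq(\pi)$ remains faithful.
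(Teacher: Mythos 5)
The paper offers no proof of this corollary at all --- it is introduced with the words ``the above lemma can be restated as follows'' --- so your reconstruction is being measured against an implicit argument rather than a written one. For the definability clause, your reduction is exactly the step the authors leave unsaid: Lemma \ref{ralg} only covers $\Qq$-linearly independent coefficient tuples, and passing to a $\Qq$-basis $\gamma_1,\dots,\gamma_k$ of $\mathrm{span}_\Qq(\alpha_1,\dots,\alpha_n)$, rewriting $\sum_i\alpha_ix_i=\sum_j\gamma_j(\sum_iq_{ij}x_i)$, and substituting the ($\0$-definable, by divisibility) $\Qq$-linear terms into $C_{\bar\gamma}$ is the right way to get the general case; this part is correct. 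For the second clause you have, quite reasonably, had to reinterpret the statement: read literally, ``$C_{\bar\alpha}(\bar r)\equiv C_{\bar\beta}(\bar s)$'' as logical equivalence of two inequalities cannot be equivalent to equality of tensors (two instances can both hold for unrelated reasons), and your reading --- that the two instances determine the same linear form, equivalently the same element of the tensor product --- is the one actually needed in Proposition \ref{QE}, where the predicates must induce a well-defined order on $\mathcal Q\otimes_\Qq\Qq(\pi)$. Your identification of linear disjointness as the crux is also correct, with two small precisions worth recording: what is needed is that $\pi$ is transcendental over $\Ralg$ (which does follow from transcendence over $\Qq$, since $\Ralg/\Qq$ is algebraic), and the object on which evaluation is faithful is $\Ralg\otimes_\Qq\Qq(\pi)$ --- not $\Rr\otimes_\Qq\Qq(\pi)$ as printed, since the evaluation map on the latter kills $\pi\otimes1-1\otimes\pi$. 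Your restriction to the finite-dimensional $\Qq$-subspaces spanned by the (algebraic) entries of $\bar r$ and $\bar s$ quietly repairs exactly this point, so the proposal is sound and supplies genuinely more than the paper does.
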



Let $T_0$ be the $\CL^\pi$-theory of ordered $\Qq$-vector spaces expressing the conclusion of the previous corollary. By construction $\Ralg^\pi\models T_0$. 

\begin{proposition}\label{QE}
	The theory $T_0$ has quantifier elimination. 
\end{proposition}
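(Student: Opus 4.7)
The plan is to establish quantifier elimination via the standard substructure completeness criterion, realised by a one-element back-and-forth. First I would make explicit the semantic content of $T_0$: any model $\CM \models T_0$ should sit as a $\Qq$-subspace of an ordered $\Qq(\pi)$-vector space $\widehat M \cong M \otimes_\Qq \Qq(\pi)$, with the predicate $C_{\bar\alpha}(\bar x)$ interpreting $\sum_i \alpha_i x_i <_{\widehat M} 0$, and with $M$ order-dense in $\widehat M$. The compatibility axioms extracted from the corollary preceding the definition of $T_0$, together with the ordered divisible $\Qq$-vector space axioms, force the existence and essential uniqueness of this extension; order-density is to be verified (or added explicitly) as part of $T_0$. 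Substructures in $\CL^\pi$ are automatically $\Qq$-subspaces.

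Fix $\CM,\CN \models T_0$, a common substructure $A \subseteq \CM$ together with an $\CL^\pi$-embedding $f : A \hookrightarrow \CN$, and choose $\CN$ to be $|M|^+$-saturated. The extension step is: for each $a \in M$, produce $b \in N$ realising the same quantifier-free $\CL^\pi$-type over $A$. The crucial algebraic input, implicit in the second half of Lemma \ref{ralg}, is that any $\Qq(\pi)$-linear dependency among elements of $M$ reduces to a $\Qq$-linear dependency: expand the $\Qq(\pi)$-coefficients as polynomials in $\pi$, collect powers, and invoke transcendence of $\pi$ to obtain $\Qq$-relations between the elements themselves. Consequently, if $a$ is $\Qq$-dependent on $A$ then already $a \in A$ and we set $b := f(a)$. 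Otherwise $a$ is $\Qq(\pi)$-independent from $A$ in $\widehat M$, and its qf-type over $A$ is pinned down entirely by the cut it induces on the $\Qq(\pi)$-span of $A$ inside $\widehat M$, i.e.\ by the truth values of the atoms $C_{(\gamma,\bar\alpha)}(a,\bar a')$ for $\bar a' \in A^n$.

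To realise this cut in $\CN$, I would use that by order-density of $N$ in $\widehat N$, any finite collection of strict two-sided bounds $\sum \beta_j f(a'_j) < y < \sum \gamma_k f(a''_k)$ can be met by an element $y \in N$, so the cut is finitely satisfiable; by $|M|^+$-saturation the whole cut is then realised by some $b \in N$, and a routine verification shows $b$ satisfies every atomic and negated atomic $\CL^\pi$-formula over $A$ that $a$ does. The main obstacle is axiomatic: one must confirm that $T_0$ as defined genuinely encodes both (a) the compatibility of the $C$-predicates with a $\Qq(\pi)$-vector space ordering on the tensor extension (essentially a transcription of the corollary) and (b) the order-density of $M$ in $\widehat M$ (which underwrites the back-and-forth and which may need to be made explicit). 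Once this setup is secured, the reduction of $\Qq(\pi)$- to $\Qq$-dependencies and the density-plus-saturation realisation of the cut are routine.
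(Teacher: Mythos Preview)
Your proposal is correct and follows essentially the same route as the paper: embed models into $M \otimes_\Qq \Qq(\pi)$, reduce the quantifier-free type of a new element to the cut it realises over the $\Qq(\pi)$-span of the base, and realise that cut by saturation in the target. You are more explicit than the paper about order-density of $M$ in $\widehat M$ and about the reduction of $\Qq(\pi)$-dependencies to $\Qq$-dependencies, but these are precisely the points the paper leaves implicit in its terse description of $T_0$.
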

\begin{proof}
	Let $\mathcal Q_1, \mathcal Q_2\models T_0$ be saturated of the same cardinality. We will show that if $A_i\subseteq \mathcal Q_i$ are small divisible subgroups, and $f: A_1\to A_2$ is a partial $\CL^\pi$-isomorphism, then $f$ can be extended to any $a\in \mathcal Q_1$.
	
	Since $f$ is an $\CL^\pi$-isomorphism $\mathcal Q_i\models T_0$ we can extend $f$ to an isomorphism of $A_1\otimes \Qq(\pi)$ with $A_2\otimes \Qq(\pi)$. Identify $a$ with $a\otimes 1$. By quantifier elimination in the theory of ordered divisible abeian groups and since $A_1\otimes \Qq(\pi)$ is a divisible abelian sub-group, $\tp(a/A_1\otimes \Qq(\pi))$ is determined by the cut it realises. Thus, it will suffice to show that the same cut over $A_2\otimes \Qq(\pi)$ is realised in $\mathcal Q_2\otimes \Qq(\pi)$. But since $\mathcal Q_2$ is a saturated model of $T_0$ we automatically get that $\mathcal Q_2\otimes \Qq(\pi)$ is saturated (for $\le)$, as required.
\end{proof}


We are now ready to present our main example. Let 
$$\Qq_{vs}^\pi =(\Qq, \le, 0,1, +, \pi\cdot).$$
Clearly, $\Qq_{vs}^\pi \models T_0$, so by quantifier elimination (Proposition \ref{QE}) $\Qq_{vs}^\pi \equiv \Ralg^\pi$. In particular, since $\Ralg^\pi$ is weakly o-minimal and non-valuational, so is $\Qq_{vs}^\pi$.

\begin{theorem}
	$\Qq_{vs}^\pi$ is not a reduct of an o-minimal trace.
\end{theorem}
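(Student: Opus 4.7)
The plan is to argue by contradiction. Suppose $\Qq_{vs}^\pi$ were an ordered-group reduct of an o-minimal trace $\CM$ with universe $\Qq$, so that $\CM$ is induced on $\Qq$ from a dense pair $(\CN_0, \CM|\CL_0)$ of o-minimal expansions of ordered groups, for some language $\CL_0 \supseteq \{<, +, 0\}$. Since $\Qq_{vs}^\pi$ is a reduct of $\CM$, the $\0$-definable relation $y < \pi x$ is the trace on $\Qq^2$ of some $\CN_0$-definable set $R \subseteq N_0^2$. For each $q \in \Qq^{>0}$, o-minimality makes $a(q) := \sup\{y \in N_0 : (q, y) \in R\}$ a well-defined element of $N_0$ that realises the irrational non-valuational cut of $\Qq$ located at the real number $\pi q$, and $a$ extends to an $\CN_0$-definable function $\tilde a : N_0^{>0} \to N_0$.

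I would next record two elementary facts about the dense pair. First, $\Qq$ is cofinal (and coinitial) in $N_0$: were $\omega \in N_0$ greater than every rational, then so would be $\omega - 1$, yet no element of $\Qq$ could lie in the nonempty interval $(\omega - 1, \omega)$, contradicting density. Second, distinct elements of $N_0$ have distinct $\Qq$-cuts (some rational lies strictly between them). Consequently, for positive rationals $q_1 < q_2$, $\tilde a(q_2) - \tilde a(q_1) \in N_0$ realises the irrational cut of $\Qq$ at $\pi(q_2 - q_1)$, and in particular does not lie in $\Qq$.

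The heart of the argument applies the Peterzil--Starchenko trichotomy to $\CN_0$. If $\CN_0$ is \emph{linear}, it is an ordered vector space over a definable division ring $D$; by elementarity $\CM|\CL_0$ is also a $D$-vector space on $\Qq$, and $d \mapsto d \cdot 1$ embeds $D$ as an ordered subfield of $\Qq$, forcing $D = \Qq$. Then on an interval around any $q_0 \in \Qq^{>0}$ we have $\tilde a(x) = cx + d$ with $c \in \Qq$, $d \in N_0$, and for two rationals $q_1 < q_2$ in that interval $\tilde a(q_2) - \tilde a(q_1) = c(q_2 - q_1) \in \Qq$---contradicting the cut observation above. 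If $\CN_0$ is \emph{semi-bounded}, then definable functions are piecewise $\Qq$-affine outside some bounded interval $[-B, B] \subseteq N_0$, and cofinality of $\Qq$ yields two rationals $q_1 < q_2$ both exceeding $B$ in a common linear piece, so the linear-case argument applies verbatim. Finally, if $\CN_0$ is \emph{globally field-like}, it defines a real closed field with additive group $(N_0, +)$; by elementarity $\CM|\CL_0$ defines an RCF structure on $(\Qq, +)$. But any ring multiplication $\cdot_K$ on the $1$-dimensional $\Qq$-vector space $(\Qq, +)$ is, by distributivity and divisibility, $\Qq$-bilinear and hence of the form $x \cdot_K y = \lambda xy$ for some $\lambda \in \Qq$; the resulting ordered field is isomorphic via $x \mapsto \lambda x$ to the standard $(\Qq, +, \cdot, <)$, which is not real closed---contradiction.

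The main obstacle I anticipate is the semi-bounded case, where one must rule out the possibility that all of $\Qq$ lies in the ``bounded part'' of $\CN_0$ (where definable functions can carry arbitrary slopes from a bounded definable field); this is precisely what the cofinality observation accomplishes, by locating rationals beyond any definable finite bound in $N_0$.
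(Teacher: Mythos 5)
Your proposal reaches the correct conclusion and its overall shape matches the paper's: both proofs reduce to the Miller--Starchenko dichotomy for the big o-minimal structure, and your treatment of the field case (any ring multiplication on $(\Qq,+)$ is $\Qq$-bilinear, hence a rational rescaling of the standard one, which is not real closed) is essentially verbatim the paper's argument. Where you genuinely diverge is in the linearly bounded case. The paper shows that $x\mapsto \pi x$ extends to a \emph{definable endomorphism} of the additive group of the ambient o-minimal structure (via the uniqueness of the o-minimal completion, Corollary \ref{unique}), and then invokes Theorem B of \cite{MilStar}, which says such endomorphisms are $\0$-definable --- contradicting $\mathcal Q\prec \CR$ since $\pi q\notin\Qq$. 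You instead build a definable function $\tilde a$ whose values at rationals realize the cuts at $\pi q$, and contradict the piecewise/eventually affine structure of unary definable functions by computing the cut realized by $\tilde a(q_2)-\tilde a(q_1)$. Your route is more hands-on and avoids the completion machinery; the paper's is shorter because ``definable endomorphisms are $\0$-definable'' does all the work at once and makes your linear/semi-bounded split unnecessary (Theorem B already covers the whole linearly bounded case, and the $\0$-definable slopes $\lambda$ satisfy $\lambda(q)\in\Qq$ for $q\in\Qq$ by elementarity, which is all you need). Two points to repair. First, $a(q):=\sup\{y\in N_0:(q,y)\in R\}$ need not realize the cut at $\pi q$: the fibre $R_q$ is only constrained on its trace, so it may contain isolated points of $N_0\setminus\Qq$ far above $\pi q$ (take $R$ to be $\{(x,y):y<\pi x\}$ union a horizontal line at an irrational height). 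You must instead extract, definably and uniformly in $q$, the supremum of the convex component of $R_q$ that is cofinal in $\{y\in\Qq: y<\pi q\}$; this is routine by o-minimality but needs saying. Second, the trichotomy you cite is really the Peterzil--Starchenko \emph{local} trichotomy; what you are actually using are the global structure theorems (Miller--Starchenko for the dichotomy and eventual affineness in the linearly bounded case, Loveys--Peterzil for the linear case), and in the semi-bounded case the correct statement is that unary definable functions are \emph{eventually} affine with $\0$-definable slope, not piecewise affine outside a bounded box --- your cofinality observation still lets the argument go through with that weaker statement.
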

\begin{proof}
	Assume towards a contradiction that there exists an o-minimal structure $\mathcal Q$ with universe $\Qq$, and $\CR\succ \mathcal Q$ such that $(\CR, \mathcal Q)$ is a dense pair, and the structure induced on $\mathcal Q$ from $\CR$ expands $\Qq_{vs}^\pi$.
	
	The desired conclusion now follows from \cite{MilStar} as follows. First, by Theorem A thereof, either $\CR$ is linearly bounded or there is a definable binary operation $\cdot$ such that $(R,\le, +, \cdot)$ is a real closed field. Since $\mathcal Q\prec \CR$, in the latter case we would have a binary operation $\cdot_Q$ definable in $\mathcal Q$ making $(\Qq,\le, +, \cdot_Q)$ into a real closed field. But that is impossible, because $(\Qq, +)$ is the standard addition on $\Qq$, and therefore there exists at most one field structure (definable or not) expanding it, and $(\Qq,+,\cdot)$ is not real closed.
	
	So we are reduced to the linearly bounded case. By Theorem B of \cite{MilStar} every definable  endomorphism of $(R,+)$ is $\0$-definable. Thus, it will suffice to show that $x\mapsto \pi x$ is a definable endomorphism of $(R,+)$, since then it would be $\0$-definable, contradicting the assumption that $\mathcal Q\prec \CR$. But this should now be obvious, since $x\mapsto \pi x$ is definable as a function from $\mathcal Q$ to $\overline {\mathcal Q}$ and strongly continuous, so by Corollary \ref{unique} $x\mapsto \pi x$ is a definable continuous function in $\CR$, which is clearly an endomorphism of $(R,+)$.
\end{proof}

The above proof actually shows more:

\begin{corollary}
	If $\mathcal Q\equiv \Qq_{vs}^\pi$ then $\mathcal Q$ is not an o-minimal trace. In particular, $\Ralg^\pi$ is not an o-minimal trace. 
\end{corollary}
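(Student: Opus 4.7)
The plan is to rerun the proof of the preceding theorem, this time with the specific structure $\Qq_{vs}^\pi$ replaced by an arbitrary $\mathcal Q \equiv \Qq_{vs}^\pi$, substituting the two appeals to ``$Q = \Qq$'' by arguments that transfer under elementary equivalence. So I would assume for a contradiction that $\mathcal Q$ is an o-minimal trace witnessed by some $\widetilde{\mathcal Q}$ (an o-minimal expansion of a group) in a signature $\CL_0$, with $\mathcal Q|\CL_0 \prec \widetilde{\mathcal Q}$ and $Q$ dense in $\widetilde Q$; after passing to a sufficiently saturated elementary extension I may further assume that every $\mathcal Q$-definable cut is realised in $\widetilde{\mathcal Q}$, so that Corollary \ref{unique1} identifies the o-minimal structure induced by $\widetilde{\mathcal Q}$ on $\overline{Q}$ with Wencel's canonical extension $\overline{\mathcal Q}$. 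Fact \ref{fact-Wenc} then promotes the strongly continuous map $x\mapsto \pi x:Q\to \overline{Q}$ to a continuous, $\widetilde{\mathcal Q}$-definable endomorphism $\phi$ of $(\widetilde Q,+)$.

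Next I would invoke the Miller--Starchenko dichotomy \cite{MilStar} on $\widetilde{\mathcal Q}$. In the non-linearly-bounded case, Theorem A of \cite{MilStar} produces a $\0$-definable binary operation $\cdot$ making $(\widetilde Q,\le,+,\cdot)$ a real closed field; pulling this formula back via $\mathcal Q|\CL_0\prec \widetilde{\mathcal Q}$ and transferring via $\mathcal Q\equiv\Qq_{vs}^\pi$ yields a $\0$-definable real closed field structure on $(\Qq,\le,+)$ inside $\Qq_{vs}^\pi$. But $(\Qq,+)$ is one-dimensional as a $\Qq$-vector space, so any ordered field structure on it is, via scaling by the multiplicative identity, isomorphic to the standard ordered field $\Qq$, which is visibly not real closed---contradiction. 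In the linearly-bounded case, Theorem B of \cite{MilStar} forces $\phi$ to be $\0$-definable in $\widetilde{\mathcal Q}$, whence $r:=\phi(1)$ is a $\0$-definable element of $\widetilde Q$ and therefore belongs to $Q$ by elementarity. Since $\phi$ extends $x\mapsto \pi x$, in $\mathcal Q$ the set defined by the relation ``$\pi\cdot 1 < y$'' coincides with $\{y\in Q:y>r\}$, so the first-order sentence $\exists r'\,\forall y\,(R_\pi(1,y)\leftrightarrow y>r')$ holds in $\mathcal Q$. But the same sentence is false in $\Qq_{vs}^\pi$---any such $r'$ would have to equal $\pi$, and $\pi\notin\Qq$---contradicting $\mathcal Q \equiv \Qq_{vs}^\pi$. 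The ``in particular'' assertion is then immediate since $\Ralg^\pi \equiv \Qq_{vs}^\pi$ by Proposition \ref{QE}.

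The main technical hurdle I anticipate is the very first step, namely identifying the abstract trace witness $\widetilde{\mathcal Q}$ with an enlargement of Wencel's canonical o-minimal extension $\overline{\mathcal Q}$, so that ``scalar multiplication by $\pi$'' genuinely lifts to a definable endomorphism of $(\widetilde Q,+)$ on which Miller--Starchenko can be brought to bear. Once that lift is in place, the dichotomy reduces everything to two elementary properties of $\Qq_{vs}^\pi$---the non-real-closedness of the standard ordered field $\Qq$ and the irrationality of $\pi$ over $\Qq$---each of which descends to every elementary equivalent and furnishes the required contradiction.
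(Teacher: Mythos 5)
Your proposal follows essentially the same route as the paper, which offers no separate argument for this corollary beyond the remark that the proof of the preceding theorem ``shows more'': you correctly isolate the only two places where the specific universe $\Qq$ enters (the impossibility of a real closed field structure on the standard $(\Qq,\le,+)$, and the irrationality of $\pi$ expressed as the sentence $\exists r'\,\forall y\,(R_\pi(1,y)\leftrightarrow y>r')$) and replace each by a first-order statement that transfers along $\mathcal Q\equiv\Qq_{vs}^\pi$; both endgames are sound. The one step that would fail as literally written is ``passing to a sufficiently saturated elementary extension'' of the witness $\widetilde{\mathcal Q}$: saturation destroys the density of $Q$ in $\widetilde Q$, which is needed both for the dense-pair description of the trace and for Corollary~\ref{unique1}. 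Fortunately the move is unnecessary: since every $\mathcal Q$-definable set is the trace of a $\widetilde{\mathcal Q}$-definable set, every irrational $\mathcal Q$-definable cut is already realised in $\widetilde Q$, so $\widetilde Q$ identifies order-canonically with $\overline Q$ and Corollary~\ref{unique1} together with Fact~\ref{fact-Wenc} yields the definable endomorphism lift of $x\mapsto\pi x$ exactly as in the paper's proof of the theorem.
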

 
\begin{remark} We have shown that $\Ralg^\pi$ witnesses the fact that the class of o-minimal traces is not closed under reducts, and $\Qq_{vs}^\pi$ witnesses that the class of reducts of o-minimal traces is not closed under elementary equivalence. 
\end{remark}
 
\section{Atomic models and elimination of imaginaries}
In the o-minimal context definable Skolem functions have two main applications. The first, is a simple proof of the existence of atomic models, and in its stronger form of \emph{definable choice} it implies elimination of imaginaries. 

In the o-minimal context both properties can be proved under fairly general assumptions,  also for structures not supporting definable Skolem functions. In the present section we investigate these two properties in the weakly o-minimal non-valuational case. We show that all known examples of such structures do not have atomic models. We then discuss elimination of imaginaries, obtaining -- using the new machinery developed in \cite{EBY} -- some positive results.

The obstacle to the existence of atomic models is simple: if $\CM$ is strictly weakly o-minimal and non-valuational there exists a definable cut $C$ that is irrational over $\CM$,  i.e., $\CM$ is dense at $C$. But this need not be the case over arbitrary sets. In the case of o-minimal traces, our control over the definable closure operator, allows us to construct such examples: 

\begin{proposition}
	Let $\CM_0$ be a saturated o-minimal expansion of a group in the signature $\CL_0$. Let $\tp(c/\CM_0)$ be that of an irrational non-valuational cut, and $\CM$ the expansion of $\CM_0$ by the externally definable cut $x<c$. Then $\mathrm{Th}(\CM)$ does not have atomic models over arbitrary sets. 
\end{proposition}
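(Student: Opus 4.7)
The plan is to produce a set $A$ (in the universe of some model of $\mathrm{Th}(\CM)$) together with a consistent formula $\phi(x,\bar a)$ over $A$ such that every complete type over $A$ extending $\phi$ is non-isolated. By the standard characterization of atomic models via density of isolated types in $S_1(A)$, this immediately rules out any atomic model of $\mathrm{Th}(\CM)$ over $A$. The construction will exploit the saturation of $\CM_0$ and the fact, recalled throughout Section 2, that the cut $P$ is both irrational and non-valuational.

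Concretely, using the saturation of $\CM_0$ and non-valuationality of $\tp(c/\CM_0)$, I would first extract monotone sequences $(a_n)_{n\in\omega},(b_n)_{n\in\omega}\subset M_0$ with $a_n<c<b_n$, $a_n\nearrow c$, $b_n\searrow c$, and $b_n-a_n\to 0$; these encode a non-valuational approach to $c$ from both sides. Set $A$ to consist of these sequences, augmented by a small positive $\epsilon\in A$ and by any generic auxiliary parameters needed. Then $\phi(x):=P(x)\wedge \neg P(x+\epsilon)$ has all realizations in any model $\CN\models\mathrm{Th}(\CM)$ with $\CN\supseteq A$ confined to the ``sliver'' $[c-\epsilon,c)$, by the definition of $P$ and the axiom that $P^\CN$ is a non-valuational cut. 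The core claim is that any complete type $p\in S_1(A)$ extending $\phi$ forces $x>a_n$ for all sufficiently large $n$ and $x<b_n$ for all $n$. Any putative isolating formula $\psi(x,\bar a')$ involves only finitely many parameters from $A$, so it cannot entail the infinitely many lower bounds $a_n<x$ coming from tail $a_n$'s not appearing in $\bar a'$, contradicting $\psi\vdash p$.

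The main obstacle is that the elements $a_n\in A$ with $a_n\in[c-\epsilon,c)$ themselves satisfy $\phi$ and, being in $A$, give trivially isolated completions $x=a_n$. To rule these out one must refine $\phi$ (and possibly $A$) so that the candidate isolated completions drop out. The natural way to achieve this is to use the non-existence of Skolem functions for the formula $\phi_C$ from Lemma~2.8: by strengthening $\phi$ to a formula that demands $x$ play the role of a Skolem witness for $\phi_C$ (say, imposing $\exists y\in C: y<x \wedge x+y\notin C$ within the sliver $[c-\epsilon,c)$), the elements $a_n\in A$ no longer satisfy the strengthened formula in any canonical $A$-definable way, yet every realization in a model still sits infinitesimally close to the cut. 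The heart of the argument is to verify that, under this refinement, no element of $\text{dcl}_{\CL_0}(A)$ provides an isolated completion, while every non-$\text{dcl}$ realization produces one of the non-isolated ``at-the-cut'' types identified above. This final step is where the absence of definable Skolem functions — the defining phenomenon of the weakly o-minimal non-valuational setting — must be translated into the absence of isolated completions.
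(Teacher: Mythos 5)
Your overall strategy is sound in principle: since, by weak o-minimality and the group structure, the only isolated $1$-types over a set $A$ are the algebraic ones, producing a consistent formula over $A$ all of whose completions are non-isolated amounts to producing a consistent formula with no solution in $\dcl_\CM(A)$, i.e.\ (by Tarski--Vaught) witnessing that $\dcl_\CM(A)$ is not an elementary substructure --- which is exactly the reduction the paper makes. The gap is in the execution. Your formula $C(x)\wedge\neg C(x+\epsilon)$ defines the trace on $M$ of the ``sliver'' $[c-\epsilon,c)$, and since the cut is irrational over $M$ this trace contains cofinally many of your own parameters $a_n$; each such $a_n$ yields an algebraic, hence isolated, completion, so this formula cannot witness non-atomicity. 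You notice this, but the proposed repair fails: for $a_n$ close to $c$ the condition $\exists y\,(y\in C\wedge y<a_n\wedge a_n+y\notin C)$ is still satisfied (one needs some $y\in[c-a_n,c)$ with $y<a_n$, and such $y\in M$ exists precisely because $M$ is dense at the cut), and more generally no strengthening of the formula can discard the algebraic completions so long as $\dcl_\CM(A)$ meets its solution set. The non-existence of definable Skolem functions is not the mechanism here and plays no role in the paper's argument for this proposition.

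What is missing is control of $\dcl_\CM(A)$. The paper builds $A$ so that two things hold simultaneously: (i) the predicate $C$ defines a \emph{rational} cut of $A$ while $\theo(\CM)$ asserts it is irrational, and (ii) $\dcl_\CM(A)=A$. Concretely, using saturation of $\CM_0$ one constructs a small $\CN_0\prec\CM_0|\CL_0$ over which $c$ still realizes an irrational non-valuational cut (iterating $a_i\models\tp(c/A_{i-1})$), and then adjoins one realization $a$ of $\tp_{\CL_0}(c/N_0)$; then $|c-a|$ is below every positive element of $\CN_0(a)$, giving (i), and the dense-pair description of the induced structure gives $\dcl_\CM(\CN_0(a))\subseteq\CN_0(ac)\cap M=\CN_0(a)$, giving (ii). With $A:=\CN_0(a)$ the formula $a<x\wedge C(x)$ is consistent (density of $M$ at the cut) but has no solution in $\dcl_\CM(A)$, so \emph{every} completion is non-isolated. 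Your countable approximating sequences give no handle on either (i) or (ii) --- note also that in a saturated $\CM_0$ no countable sequence of positive elements tends to $0$, so the requirement $b_n-a_n\to 0$ cannot even be arranged as stated --- and repairing the argument essentially forces you back to the paper's construction.
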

\begin{proof}
	Let $A\subseteq M$ be a small set. We note that by weak o-minimality (and the group structure) the only types isolated over $A$ are algebraic. So it will suffice to find $A\subseteq M$ such that $\dcl_{\CM}(A)$ is not an elementary substructure. 
	
	Choose any small $\CN_0\subseteq \CM_0$ such that $\tp(c/N_0)$ is an irrational non-valuational cut. This can be done as follows: choose $\{a_i\}_{i\in \omega}\subseteq M$ inductively by $a_i\models \tp(c/A_{i-1})$, where $A_i=\dcl_{\CM_0}(A_{i-1}a_i)$ and $A_{-1}=\0$. Then set $\CN_0:=\bigcup_{i\in \omega}A_i$. Saturation of $\CM_0$ assures that this construction can be carried out. 
	
	Now let $a\in \CM$ be such that $\tp_{\CL_0}(a/N_0)=\tp_{\CL_0}(c/N_0)$. Then, by construction $\CN_0$ is dense in $\CN_0(a)$ and $a$ realises an irrational cut over $N_0$. It follows that $\tp(c/\CN_0a)$ is a rational cut. To show this it will suffice to prove that $|c-a|<b$ for all $b\in \CN_0(a)$. But because $N_0$ is dense in $\CN_0$ it will suffice to check the same thing for $b\in N_0$. This is now immediate from the choice of $a$ and the fact that $c$ realises an irrational  non-valuational cut over $\CN_0$. 
	
	It follows that $\CN_0(a)\not\equiv \CM$ (because $C$ -- the externally definable set $x<c$ -- is a rational cut over $\CN_0(a)$ but not over $M$). Finally, since, e.g., by \cite{vdDriesDense} $\CM$ is precisely the expansion of $\CM_0$ by cuts defined by $\CM_0(c)$ over $M_0$ we get that $\dcl_\CM(\CN_0(a))\subseteq \CN_0(ac)\cap M$. Since $\dim_{\CM_0}(c/M)=1$ it follows that $\dcl_\CM(\CN_0(a))=\CN_0(a)$, with the desired conclusion. 
\end{proof}

We point out that the key to the above proof is the fact that $\dcl_{\CM}(\CN_0(a))\subseteq \CN_0(ac)\cap M$. There are two observations that arise from this: 
\begin{enumerate}
	\item The exact same proof would work if $\CM$ were the expansion of $\CM_0$ by any number of distinct non-valuational irrational cuts. So it works for any o-minimal trace. 
	\item If the above proof is valid for a structure $\CM$ then it is valid for any ordered group reduct of $\CM$. 
\end{enumerate}
Thus we have shown: 
\begin{corollary}
	Let $\CM$ be an ordered group reduct of an o-minimal trace. Then $\mathrm{Th}(\CM)$ does not have atomic models over arbitrary sets. 
\end{corollary}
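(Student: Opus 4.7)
The plan is to adapt the previous proposition's proof via observation (2) immediately preceding the corollary. By weak o-minimality together with the group structure, every type isolated over a set is algebraic, so any atomic model over $A$ is forced to coincide with $\dcl_\CM(A)$; it therefore suffices to exhibit a set $A$ (in some elementary extension, since the property is one of $\mathrm{Th}(\CM)$) for which $\dcl_\CM(A)$ is not an elementary substructure. We may assume $\CM$ is strictly weakly o-minimal, the conclusion being vacuous otherwise, and that the ambient dense pair $(\CN_0, \CM_0)$ witnessing that $\CM^*$ is an o-minimal trace has been chosen saturated, where $\CM^*$ denotes the o-minimal trace of which $\CM$ is a reduct.

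Being non-o-minimal, $\CM$ admits a definable irrational non-valuational cut $C = \phi(M, \bar d)$. Since $\phi$ is also a $\CM^*$-formula, the theory of dense pairs supplies an external point $c \in N_0$ with $C = \{x \in M : x < c\}$. Now mimic the previous proof for this particular $c$: saturation of $\CM_0$ yields a countable $\CN \subseteq M$ containing $\bar d$ with $\tp_{\CL_0}(c/\CN)$ an irrational non-valuational cut, and we realize this $\CL_0$-type by some $a \in M$. Setting $A := \CN \cup \{a\}$, the previous proposition's computation delivers
\[\dcl_\CM(A) \subseteq \dcl_{\CM^*}(A) \subseteq \dcl_{\CM_0}(\CN \cup \{a, c\}) \cap M,\]
and shows that $a$ is the maximum of $C$ within $\dcl_{\CM^*}(A)$ (or the minimum of its complement, according to the sign of $c - a$). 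Since $a \in \dcl_\CM(A) \subseteq \dcl_{\CM^*}(A)$, the same $a$ continues to witness rationality of $C$ inside $\dcl_\CM(A)$.

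The $\CM$-formula with parameters $\bar d, a$ asserting that $a$ is the maximum of $\phi(\cdot, \bar d)$ (or the minimum of its complement) is therefore true in $\dcl_\CM(A)$ but false in $\CM$, the latter because $C$ is irrational over $M$; this establishes $\dcl_\CM(A) \not\prec \CM$, as required. The principal technical point is to ensure that the witnessing formula is expressible in the reduct $\CM$ rather than only in $\CM^*$. This is why $C$ is chosen from the outset as a $\CM$-definable cut, whose existence is guaranteed by the strict weak o-minimality of $\CM$; the dense-pair machinery still locates the required external realization $c$ in $\CN_0$, and the rest of the preceding proof transfers essentially verbatim.
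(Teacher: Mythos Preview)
Your proof is correct and follows essentially the same route as the paper, whose argument for the corollary consists solely of the two observations preceding it. You make explicit what the paper leaves terse: that the cut $C$ must be chosen $\CM$-definable (not merely $\CM^*$-definable) so that the rationality-witnessing formula lives in the reduct, and that the $\dcl$ chain passes through $\dcl_{\CM^*}$ where the previous proposition's control over definable closure applies.
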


As we do not know of any example of a weakly o-minimal non-valuational structure whose theory is not that of an ordered group reduct of an o-minimal trace it is natural to ask: 

\begin{question}
	Assume that $T$ is a weakly o-minimal non-valuational theory expanding the theory of ordered group. Assume that $T$ has atomic models (i.e., over any set). Is $T$ o-minimal?
\end{question}

We now turn to the question of elimination of imaginaries. To the best of our knowledge, the only place the problem is addressed is \cite[Theorem 6.3]{Wenc1}. As our previous proposition shows, the assumptions in the main part of Wencel's result are not met in all known examples\footnote{Moreover, Wencel's proof seems to use \cite[Proposition 6.2]{Wenc1}. The justification for this usage is unclear to us.}. Here we take a different approach. To start we need: 
\begin{definition}
	Let $\CM$ be a weakly o-minimal expansion of an ordered group. Let $\overline M$ be the set of $\CM$-definable cuts, and $\CM_0^*$ the structure with universe $\overline M$ and whose atomic sets are $\cl_{\overline M}(D)$ for all $D$ $\CM$-definable over $\0$. 
\end{definition}

The following is the main result in the M.Sc. thesis of E. Bar Yehuda, \cite{EBY}: 
\begin{fact}\label{EBY}
	Let $\CM$ be a weakly o-minimal non-valuational expansion of an ordered group. Then $\CM_0^*$ is o-minimal and the structure it induces on $\CM$ is precisely $\CM$. Moreover, if $\CM\equiv \CN$ then $\CM_0^*\equiv \CN_0^*$. 
\end{fact}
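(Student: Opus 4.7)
The plan is to identify $\CM_0^*$ with a canonical reduct of Wencel's o-minimal extension $\overline\CM$ (Fact~\ref{fact-Wenc}), so that o-minimality and the induced-structure claim are inherited directly, while the elementary-equivalence statement is proved via a uniform syntactic translation.

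First I would observe that $\CM_0^*$ expands $(\overline M,<)$: the strict order on $\overline M$ is definable in $\CM_0^*$ from the atomic predicates $\cl_{\overline M}(\{(x,y)\in M^2: x<y\})$ and $\cl_{\overline M}(\{(x,y)\in M^2: x=y\})$, both $\0$-definable in $\CM$. Since $\CM_0^*$ is a reduct of $\overline \CM$, every $\CM_0^*$-definable subset of $\overline M$ is in particular $\overline\CM$-definable, hence a finite union of intervals, so $\CM_0^*$ is o-minimal. For the induced structure on $M$, the inclusion into $\CM$ is inherited from $\overline\CM$. For the reverse inclusion I would invoke Wencel's strong cell decomposition: every $\CM$-definable set is a finite union of strong cells bounded by strongly continuous definable functions, which by Fact~\ref{fact-Wenc} extend continuously to $\overline M^n$ and arise as closures of $\0$-definable families evaluated at parameters from $M$. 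Boolean combinations of such closures, together with parameter singletons $\{a\}$ for $a\in M$, recover each strong cell exactly on $M$.

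The main obstacle is preservation of elementary equivalence. My approach is to exhibit $\CM_0^*$ as a uniform interpretation of $\CM$: the universe $\overline M$ decomposes as the union, indexed by $\0$-formulas $\phi(x;\bar y)$, of the sets $S_\phi \subseteq M^{|\bar y|}$ of parameters $\bar a$ for which $\phi(x;\bar a)$ defines a cut in $\CM$, modulo the $\CM$-definable equivalence of defining the same cut. Each atomic predicate $\cl_{\overline M}(D)$ with $D$ a $\0$-definable set, restricted to cuts coming from any fixed tuple of formulas, translates into a first-order condition of $\CM^{\mathrm{eq}}$ on parameter tuples (essentially by re-expressing topological closure in $\overline M$ as a statement about density of $D$ relative to the cuts, which is first-order expressible in $\CM$ by the density of $M$ in $\overline M$). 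Since the resulting description of $\CM_0^*$ depends only on $\mathrm{Th}(\CM)$, elementary equivalence $\CM\equiv\CN$ passes to $\CM_0^*\equiv\CN_0^*$. The delicate point -- and the technical core of the argument -- is that this is a (countable) multi-sorted interpretation rather than a finite one, so one must verify that standard interpretability arguments transfer, ensuring in particular that each fixed $\CM_0^*$-formula $\psi$ involves only finitely many of the sorts $S_\phi$ and that the intended translation $\psi\mapsto\tilde\psi$ into $\CM$ is faithful; this is where a back-and-forth between sufficiently saturated models $\CM'\succ\CM$ and $\CN'\succ\CN$ (with an $\CL$-isomorphism $\sigma$ lifted to an order-preserving $\overline\sigma:\overline{M'}\to\overline{N'}$ sending the cut defined by $\phi(x;\bar a)$ to the cut defined by $\phi(x;\sigma(\bar a))$) can be used as a cross-check of the translation.
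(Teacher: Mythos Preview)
The paper does not contain a proof of this statement. It is explicitly presented as a \emph{Fact} and attributed to the M.Sc.\ thesis of E.~Bar~Yehuda (reference \cite{EBY}): the sentence introducing it reads ``The following is the main result in the M.Sc.\ thesis of E.~Bar~Yehuda, \cite{EBY}.'' The authors quote the result as a black box and use it (together with Fact~\ref{NiceC}, also imported from \cite{EBY}) in the proof of the subsequent elimination-of-imaginaries theorem. There is therefore nothing in the present paper to compare your proposal against.

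For what it is worth, your outline is a reasonable strategy. Recognising $\CM_0^*$ as an ordered reduct of Wencel's $\overline\CM$ gives o-minimality for free, and strong cell decomposition is the natural tool for the induced-structure claim. The preservation of elementary equivalence is, as you yourself flag, where the real work lies; your multi-sorted interpretation idea is in the right spirit, but what you have written is a description of the shape such an argument would take rather than the argument itself, and the ``delicate point'' you identify is exactly the content of \cite{EBY} that the paper is importing.
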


Note that it follows from Corollary \ref{unique1} $\CM_0^*$ has the same definable sets as $\overline \CM$. We will also need the following fact from \cite{EBY}, which is key in the proof of Fact \ref{EBY}: 
\begin{fact}\label{NiceC}
	Let $\CM$ be a weakly o-minimal non-valuational expansion of an ordered group, $C\subseteq M^{n+m}$ any set $\CM$-definable over $\0$. Then there exists a $\tilde C$, $\CM_0^*$-definable over $\0$ such that for all $a\in M^n$ the set 
	\[\tilde C_a:=\{b\in \overline M^m: (a,b)\in \tilde C\}=\cl_{\overline M}(C_a)\] 
	where $C_a:=\{b\in  M^m: (a,b)\in C\}$. 
\end{fact}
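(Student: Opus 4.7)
My strategy is to apply Wencel's strong cell decomposition for weakly o-minimal non-valuational expansions of groups, together with the identification of $\CM_0^*$ and $\overline\CM$ provided by Corollary \ref{unique1} and the strong-continuity extension machinery of Fact \ref{fact-Wenc}. First, I would decompose $C$ into a finite disjoint union $C=\bigsqcup_{i=1}^{k} D_i$ of strong cells adapted to the projection $M^{n+m}\to M^n$. By construction, each $D_i$ is described, over a strong base cell of lower dimension, by finitely many strongly continuous boundary functions with values in $\overline M$.

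For each strong cell $D_i$, I would construct its counterpart $\tilde D_i\subseteq \overline M^{n+m}$ by replacing every strongly continuous boundary function by its (unique) continuous extension to $\overline M$, whose existence and $\overline\CM$-definability are granted by Fact \ref{fact-Wenc}, and hence whose $\CM_0^*$-definability follows from Corollary \ref{unique1}. Setting $\tilde C:=\bigcup_i \tilde D_i$ then produces a $\CM_0^*$-definable set. I would verify the fiber identity $(\tilde D_i)_a=\cl_{\overline M^m}((D_i)_a)$ by induction on $m$, with the base case $m=0$ following from the trace property in Fact \ref{EBY}. In the inductive step, a typical strong cell of the form $D_i=\{(x,y):x\in X_i,\ f(x)<y<g(x)\}$ has fiber at $a\in M^n$ governed by $(X_i)_a$ and by $f(a,\cdot),g(a,\cdot)$; using density of $M$ in $\overline M$ (a direct consequence of non-valuationality) together with the inductive hypothesis applied to $X_i$, the closure of $(D_i)_a$ in $\overline M^m$ is seen to be precisely the strong cell in $\overline M^m$ cut out by the continuous extensions $\overline f(a,\cdot),\overline g(a,\cdot)$ over $(\tilde X_i)_a$, which is by definition $(\tilde D_i)_a$. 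Finiteness of the decomposition then yields $\tilde C_a=\bigcup_i (\tilde D_i)_a=\bigcup_i \cl_{\overline M^m}((D_i)_a)=\cl_{\overline M^m}(C_a)$, as required.

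\textbf{Main obstacle.} The principal subtlety is obtaining $\emptyset$-definability of $\tilde C$ in $\CM_0^*$. Wencel's strong cell decomposition is typically stated with parameters, so either a parametric version must be invoked or one must perform an automorphism/compactness argument to arrange that the final $\tilde C$ is $\emptyset$-definable (for instance, by observing that the set of valid $\tilde C$'s is preserved under $\aut(\CM/\0)$ and choosing a suitable $\aut(\CM/\0)$-invariant finite union of conjugate decompositions, or by noting that the canonical parameter of $\tilde C$ lies in $\dcl_{\CM_0^*}(\0)$). A secondary, more routine technical point is handling graph cells alongside fat cells and accommodating possibly infinite boundary values of the continuous extensions, which is standard within the cell decomposition framework but needs to be executed carefully in order to preserve the fiber identity at those $a\in M^n$ lying on the frontier of the base cells.
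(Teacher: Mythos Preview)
The paper does not prove this statement. It is stated as a \textbf{Fact} and explicitly attributed to \cite{EBY} (Bar Yehuda's M.Sc.\ thesis), introduced with the words ``We will also need the following fact from \cite{EBY}, which is key in the proof of Fact \ref{EBY}.'' There is therefore no proof in the present paper against which to compare your proposal.

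That said, your outline is broadly the natural one and is along the lines one would expect the argument in \cite{EBY} to take: use Wencel's strong cell decomposition, extend the strongly continuous boundary functions to $\overline M$ via Fact~\ref{fact-Wenc}, and assemble the pieces. Two comments on the execution. First, the identity $(\tilde D_i)_a=\cl_{\overline M^m}((D_i)_a)$ fails outright if you simply set $\tilde D_i:=\cl_{\overline M^{n+m}}(D_i)$: for $a\in M^n$ lying in the $M$-frontier of the base cell $\pi(D_i)$ you get $(D_i)_a=\emptyset$ while the closure in $\overline M^{n+m}$ may have non-empty fibre over $a$. You flag this as ``routine'', but it is precisely the point where one must build $\tilde D_i$ as a \emph{fibrewise} closure over a set $\tilde P_i\subseteq \overline M^n$ whose trace on $M^n$ is exactly $\pi(D_i)$, not its closure; arranging this compatibly with the $\0$-definable language of $\CM_0^*$ (whose atomic relations are closures of $\0$-definable sets) is exactly where the work lies, and is presumably what \cite{EBY} carries out. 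Second, your ``main obstacle'' concerning $\0$-definability is real but somewhat less severe than you suggest: by the very definition of $\CM_0^*$, the closures of $\0$-definable $\CM$-sets are atomic, so once the fibrewise construction is done correctly the $\0$-definability comes essentially for free, without needing an automorphism or compactness argument.
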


We can now state the result: 
\begin{theorem}
	Let $\CM$ be a weakly o-minimal non-valuational expansion of an ordered group. Then $\CM_0^*$ eliminates imaginaries for $\CM$. I.e., for any equivalence relation $E$, $\0$-definable in $\CM$ there exists a function $f_E$,  $\0$-definable in $\CM_0^*$ such that for all $x,y\in \dom(E)$ we have $f(x)=f(y)$ if and only if $E(x,y)$. 
\end{theorem}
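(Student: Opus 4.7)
My plan is to reduce the problem to classical elimination of imaginaries in $\CM_0^*$: because $\CM_0^*$ is o-minimal (Fact \ref{EBY}) and its language extends the ordered group operation of $\CM$ to $\overline M$ by strong continuity (Fact \ref{fact-Wenc}), $\CM_0^*$ is an o-minimal expansion of an ordered group, and so admits definable Skolem functions and the usual elimination of imaginaries. The strategy is therefore to lift $E$ to a $\0$-definable equivalence relation on $\overline M^n$ in $\CM_0^*$ whose classes, restricted to $M^n$, coincide with those of $E$; EI applied there will then yield the required function.

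Concretely, I first apply Fact \ref{NiceC} to the $\0$-definable set $E \subseteq M^{n+n}$ to obtain a $\0$-definable $\tilde E \subseteq \overline M^{2n}$ in $\CM_0^*$ satisfying $\tilde E_a = \cl_{\overline M}(E_a)$ for every $a \in M^n$. The $\0$-definable formula $\forall z\,((x,z) \in \tilde E \leftrightarrow (y,z) \in \tilde E)$ then defines an equivalence relation $R$ on $\overline M^n$ in $\CM_0^*$. By o-minimal EI in $\CM_0^*$, there is a $\0$-definable function $f_E \colon \overline M^n \to \overline M^k$ with $f_E(x) = f_E(y) \Leftrightarrow R(x,y)$, and this $f_E$ is my candidate. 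One then checks that it separates $E$-classes on $\dom(E)$.

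The forward direction is immediate: for $a, b \in \dom(E)$, $E(a,b)$ forces $E_a = E_b$, whence $\cl_{\overline M}(E_a) = \cl_{\overline M}(E_b)$, hence $\tilde E_a = \tilde E_b$, and $f_E(a) = f_E(b)$. The converse is the crux: for $a, b \in \dom(E)$ with $\cl_{\overline M}(E_a) = \cl_{\overline M}(E_b)$ one must conclude $E(a,b)$. Since equivalence classes are either equal or disjoint, this reduces to the following lemma, which I expect to be the main obstacle: two disjoint non-empty $\CM$-definable subsets of $M^n$ cannot share the same $\overline M$-closure. In dimension one this is clean: by weak o-minimality each such set is a finite union of convex sets, and being dense in a common closed $F \subseteq \overline M$ forces each to equal $F \cap M$ off a finite exceptional set, so two disjoint such sets would partition a cofinite subset of an infinite set into two cofinite pieces, a contradiction. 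For $n > 1$ I would argue by induction using Wencel's strong cell decomposition for weakly o-minimal non-valuational structures: project $D_1, D_2$ to the first $n-1$ coordinates, apply the inductive hypothesis fiberwise, and patch via the strongly continuous functions describing the cells. This cell-decomposition step is where the main technical work of the proof concentrates.
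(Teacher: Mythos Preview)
Your approach is essentially the same as the paper's: lift $E$ to $\CM_0^*$ via Fact \ref{NiceC}, apply o-minimal elimination of imaginaries (definable choice) there, and reduce the converse direction to the claim that two $\CM$-definable sets with the same $\overline M$-closure must intersect, which is then handled via strong cell decomposition. The one refinement in the paper you should be aware of is that after decomposing $C$ and $D$ into strong cells the individual cells no longer have \emph{equal} closures but only closures intersecting in full dimension, so the paper first weakens the claim to the hypothesis $\dim_{\overline M}(\cl_{\overline M}(C)\cap\cl_{\overline M}(D))=\dim_{\CM}(C)=\dim_{\CM}(D)$ before running the cell-by-cell induction; your fiberwise sketch will need the same adjustment.
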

\begin{proof}
	Let $\overline E$ be as provided by Fact \ref{NiceC}. Since $\CM$ expands a group so does $\CM_0^*$. So $\CM_0^*$ has definable choice (possibly after naming one positive constant). So there is a $\0$-definable function $f:\dom(\overline E)\to \dom(\overline E)$ such that $f(x)=f(y)$ if and only if $\overline E_x=\overline E_y$. Now, if $x,y\in \dom(E)$ and $\models E(x,y)$ then $E_x=E_y$, implying that $\cl_{\overline M}E_x=\cl_{\overline M}E_y$, so that, by the construction of $\overline E$ and $f$ we get that $f(x)=f(y)$ So it remains to check that if $x,y\in \dom(E)$ and $\overline E_x=\overline E_y$ then $E(x,y)$. Towards that end, it will suffice to show that $E_x\cap E_y\neq \0$ (because $E$ is an equivalence relation). Since $\overline E_x=\cl_{\overline M}E_x$ it is enough to prove: \\
	
	\noindent\textbf{Claim:} If $C,D\subseteq M^n$ are such that $\cl_{\overline M}(C)=\cl_{\overline M}(D)$ then $C\cap D\neq \0$. \\
	
	\noindent Let $C,D$ be as in the claim. It is standard to check  that 
	\[\dim_{\overline \CM}(\cl_{\overline M}(C))=\dim_{\CM}(C)>\dim(\cl_M(C)\setminus C)\ge \dim_{\CM} (\partial C ).\tag{*}\] 
	So the assumptions imply that $\dim_\CM(C)=\dim_\CM(D)$. It will therefore be enough to prove the weaker claim that if 
	 \[
	 \dim_{\overline M}(\cl_{\overline M}(C)\cap \cl_{\overline M}(D))=\dim_{\CM}(C)= \dim_{\CM}(D)\tag{**}
	 \] then $C\cap D\neq \0$. 
	Fix strong cell decompositions $\mathcal C$ and $\mathcal D$ of $C$ and $D$ respectively. Since $\cl_{\overline M}(C)=\bigcup \{\cl_{\overline M}(C_i): C_i\in \mathcal C\}$, and similarly for $\CD$, there must be cells $C_i\in \CC$ and $D_j\in CD$ satisfying ($**$) above. Thus, we are reduced to proving the lemma under the assumption that $C,D$ are strong cells. 
	
	Now the proof of the claim for strong cells is an easy induction. If $C,D$ are $0$-cells, there is nothing to show.  If $C,D$ are 1-cells then $\cl_{\overline M}(C)$ and $\cl_{\overline M}(D)$ are closed intervals, and their intersection, by assumption, is also an interval. Since $M$ is dense in $\overline M$, we get that $C\cap D$ is an interval in $M$. 
	
	Essentially the same proof works if $C,D$ are open cells -- then the assumption implies that the intersection of their closures contains an open set, and by density of $M$ in $\overline M$ and ($*$) we get the desired conclusion. 
	
	In general, $C,D$ are graphs of definable functions. Let $\pi_C$ be a projection such that $\dim_{\CM}(\pi_C(C))=\dim(C)$. So, since $C$ is a cell, $\pi_C$ is injective on $C$ and $\pi_C(C)$ is an open cell. On a set of small co-dimension of $\cl_{\overline M}(C)$ the projection $\pi(C)$ is an injection so there exists a set of full dimension in $\cl_{\overline M}(C)\cap \cl_{\overline M}(D)$ such that $\pi_C$ is an injection. Restricting to the image of that set under $\pi_C$ the claim now follows by induction. 
\end{proof}

The above, implies in particular that if $\CM$ is a structure as above, and for any $\CM$-$\0$-definable function $f:M^n\to \overline M$ there exists an $\CM$-$\0$-definable function $\tilde f:M^n\to M^m$ (some $m$) such that $\tilde f (x)=\tilde f(y)$ if and only if $f(x)=f(y)$ then $\CM$ eliminates imaginaries. Indeed, if $E$ is an $\CM$-$\0$-definable equivalence relation on $M^n$ then by the theorem there exists a $\0$-definable (in $\CM_0^*$) function $ f_E:\overline M^n\to \overline M$ such that $f_E(x)=f_E(y)$ if and only if $E(x,y)$ for $x,y\in \dom(E)$. Since the structure induced on $M$ from $\CM_0^*$ is $\CM$, the restriction $f|M^n$ is $\CM$-definable. By \cite{EBY} it is $\CM$-$\0$-definable. So the function $\tilde f_E$ eliminates the imaginary $M^n/E$. 

In view of the above it seems natural to identify the pair $(\CM_0^*, \CM)$ with $\CM^{eq}$, suggesting that this may well be the right context for studying weakly o-minimal non-valuational structures.  
\bibliographystyle{plain}

\end{document}